\numberwithin{equation}{section}
\newtheorem{theorem}{Theorem}[section]
\newtheorem{corollary}[theorem]{Corollary}
\newtheorem{lemma}[theorem]{Lemma}
\newtheorem{proposition}[theorem]{Proposition}
\newtheorem{claim}[theorem]{Claim}
\newtheorem{example}[theorem]{\sl Example}
\newtheorem{definition}[theorem]{Definition}
\theoremstyle{definition}
\newtheorem{remark}[theorem]{Remark}
\newcommand{\EE}{{\bf  E}}
\newcommand{\PP}{{\bf  P}}
\newcommand{\qq}{{\bf  q}}
\newcommand{\Var}{{\bf Var}}
\newcommand{\pit}{\pi_{-0}}
\newcommand{\Pt}{{\widetilde{P}}}
\newcommand{\cL}{\mathcal{L}}
\newcommand{\Lc}{{\mathcal L}}
\newcommand{\Lto}{{\stackrel{\Lc}{\to}}}
\newcommand{\Leq}{{\,\stackrel{\Lc}{=}\,}}
\newcommand{\Xc}{{\mathcal X}}
\newcommand{\xh}{\hat{x}}
\newcommand{\zh}{\hat{0}}
\newcommand{\Ph}{\widehat{P}}
\newcommand{\Xh}{\widehat{X}}
\newcommand{\Th}{\widehat{T}}
\newcommand{\pih}{\hat{\pi}}
\newcommand{\begp}{\begin{proposition}}
\newcommand{\enp}{\end{proposition}}
\newcommand{\begt}{\begin{theorem}}
\newcommand{\ent}{\end{theorem}}
\newcommand{\begl}{\begin{lemma}}
\newcommand{\enl}{\end{lemma}}
\newcommand{\begc}{\begin{corollary}}
\newcommand{\enc}{\end{corollary}}
\newcommand{\begcl}{\begin{claim}}
\newcommand{\encl}{\end{claim}}
\newcommand{\begr}{\begin{remark}}
\newcommand{\enr}{\end{remark}}
\newcommand{\begal}{\begin{algorithm}}
\newcommand{\enal}{\end{algorithm}}
\newcommand{\begd}{\begin{definition}}
\newcommand{\enf}{\end{definition}}
\newcommand{\begx}{\begin{example}}
\newcommand{\enx}{\end{example}}
\newcommand{\bega}{\begin{array}}
\newcommand{\ena}{\end{array}}
\newcommand{\ignore}[1]{}
\def\rompar(#1){\textup(#1\textup)}    
\newcommand\gd{\delta}
\newcommand\gl{\lambda}
\newcommand\gL{\Lambda}
\newcommand{\refS}[1]{Section~\ref{#1}}
\newcommand{\refT}[1]{Theorem~\ref{#1}}
\newcommand{\refC}[1]{Corollary~\ref{#1}}
\newcommand{\refL}[1]{Lemma~\ref{#1}}
\newcommand{\refR}[1]{Remark~\ref{#1}}
\newcommand\ie{i.e.\spacefactor=1000}
\newcommand\eg{e.g.\spacefactor=1000}
\newcommand\tr{\operatorname{tr}}
\newcommand\urladdrx[1]{{\urladdr{\def~{{\tiny$\sim$}}#1}}}
\begin{document}

\newcommand{\tab}[0]{\hspace{.1in}}


\title[Hitting Times and Interlacing Eigenvalues]
{Hitting Times and Interlacing Eigenvalues:\\
A Stochastic Approach Using Intertwinings}

\author{James Allen Fill}
\address{Department of Applied Mathematics and Statistics,
The Johns Hopkins University,
34th and 
Charles Streets,
Baltimore, MD 21218-2682 USA}
\email{jimfill@jhu.edu}
\urladdrx{http://www.ams.jhu.edu/~fill/}
\thanks{Research supported by the Acheson~J.~Duncan Fund for the Advancement of Research in
Statistics.  
}
\author{Vince Lyzinski}
\address{Department of Applied Mathematics and Statistics,
The Johns Hopkins University,
34th and 
Charles Streets,
Baltimore, MD 21218-2682 USA}
\email{vincelyzinski@gmail.com}
\thanks{Research supported by the Acheson~J.~Duncan Fund for the Advancement of 
Research in Statistics, and by U.S. Department of Education GAANN grant P200A090128.}
\date{May~29, 2012.  Revised August~30, 2012.}

\begin{abstract} \small\baselineskip=9pt 
We develop a systematic matrix-analytic approach, based on intertwinings of Markov semigroups, for proving theorems about hitting-time distributions for finite-state Markov chains---an approach that (sometimes) deepens understanding of the theorems by providing corresponding sample-path-by-sample-path stochastic constructions.  We employ our approach to give new proofs and constructions for two theorems due to Mark Brown, theorems giving two quite different representations of hitting-time distributions for finite-state Markov chains started in stationarity.  The proof, and corresponding construction, for one of the two theorems elucidates an intriguing connection between hitting-time distributions and the interlacing eigenvalues theorem for bordered symmetric matrices.
\end{abstract}

\maketitle

\section{Introduction and Outline of our General Technique}
\label{S:intro}
Recently, stochastic proofs and constructions have been provided for some theorems that give explicit descriptions of Markov chain hitting-time distributions; previously known proofs of the theorems had been analytic in nature.  Specifically, Fill~\cite{JAFBD} and Diaconis and Miclo~\cite{PDLMBD} both give stochastic constructions for a famous birth-and-death hitting-time result first proven analytically by Karlin and McGregor~\cite{KM} in 1959.  
Fill~\cite{MR2530104} (see also Miclo~\cite{MR2654550}) extends to upward-skip-free and more general chains, in particular giving a (sometimes) stochastic proof for a hitting-time theorem for upward-skip-free chains established analytically by Brown and Shao~\cite{BrownShao87}. 

In Sections \ref{S:twine}-\ref{S:quasi} we describe a systematic approach, using intertwinings of Markov semigroups, for obtaining simple stochastic decompositions of the distributions of hitting times for Markov chains and also providing sample-path-by-sample-path constructions for the individual components in these decompositions.  
For example, if one can prove a theorem that the law of a certain Markov chain hitting time~$T$ is a convolution of Geometric distributions with certain parameters, our additional goal is to decompose~$T$ explicitly---sample path by sample path---as a sum of independent Geometric random variables with the specified parameters; this deepens understanding as to ``why'' the theorem is true.  See 
Fill~\cite{MR2530104} for a class of examples using this approach.  Our approach is essentially matrix-analytic, but if certain conditions elaborated in Sections \ref{S:twine}-\ref{S:strategy} are met, then our method also yields a decomposition for each sample path.  For the applications discussed in this paper, our approach provides new matrix-analytic proofs for hitting-time results which were previously only known via analytic methods (such as computation of Laplace transforms), and these new proofs provide new insights into the evolution of the Markov chain.  A simple example of our approach, with an application to the Moran model in population genetics, is presented in \refS{S:block}.  

We then employ our intertwinings approach to provide new proofs for two theorems due to Mark Brown, providing two quite different representations of hitting-time distributions for Markov chains started in stationarity.  The proof, and subsequent construction, for the first theorem (\refS{S:lace}) will elucidate an interesting connection between hitting-time distributions and the interlacing eigenvalues theorem for bordered symmetric matrices.  Application 
of our approach obtains a construction for the second theorem (\refS{S:V}) that results in a bonus: We are able to extend Brown's theorem from reversible chains to more general ones. 

\smallskip

\emph{Notation}:\ Throughout this paper, all vectors used are by default row vectors.  
We write $\delta_j$ for the vector of $0$'s except for a~$1$ in the $j$th position, and $\vec{1}$ for the vector of $1$'s.
The transpose of a matrix~$A$ is denoted by $A^T$.  The notation $A(:,j) := A \delta_j^T$ is used to denote the $j$th column of $A$, and $A(i,:) := \delta_i A$ to denote the $i$th row of~$A$.  For
any matrix~$A$, we let $A_0$ denote the principal submatrix of~$A$ obtained by deleting the topmost row and leftmost column.

\subsection{Intertwinings and sample-path linking}
\label{S:twine} 

The main conceptual tool in our approach is the notion of an intertwining 
of Markov semigroups, for which we now provide the needed background in the context (sufficient for our purposes) of finite-state Markov chains.  For further background on intertwinings, see \cite{BIA}, \cite{CARPET}, \cite{ROGPIT}.  Suppose that we have two state spaces, the first (``primary") of size $n$ and the second (``dual") of size $\hat{n}$.  Let~$P$ be the transition matrix of a
Markov chain~$X$, begun in distribution $\pi_0$, on the primary state space.  [We write $X \sim (\pi_0, P)$ as shorthand.]  Similarly, let $\widehat{P}$ be the transition matrix of a Markov chain~$\Xh$,  begun in 
$\pih_0$, on the dual state space.  Let $\Lambda$ be an $\hat{n}$-by-$n$ stochastic matrix.
  
\begin{definition}
\label{D:twine}
\emph{
We say that the Markov semigroups $(P^t)_{t=0,1,2,\ldots}$ and $(\widehat{P}^t)_{t=0,1,2,\ldots}$ are
\emph{intertwined by the 
link~$\Lambda$} (or, for short, that~$P$ and~$\Ph$ are intertwined by the link~$\gL$) if
$$\Lambda P = \widehat{P} \Lambda;$$
and we say that $(\pi_0, P)$ and $(\pih_0, \Ph)$ are \emph{intertwined by $\Lambda$} if additionally
$$\pi_0=\pih_0 \Lambda.$$
}
\end{definition}
\smallskip

Here are three consequences when $(\pi_0, P)$ and $(\pih_0, \widehat{P})$ are intertwined by $\Lambda$ (with the first two immediate---for example, $\Lambda P^2 = \Ph \Lambda P = \Ph^2 \Lambda$---and the third \emph{crucial} for our purposes):
\begin{itemize}
\item For $t = 0, 1, 2, \dots$, we have $\Lambda P^t = \Ph^t \Lambda$.
\item For $t = 0, 1, 2, \dots$, the distributions $\pi_t$ and $\pih_t$ at time~$t$ satisfy $\pi_t = \pih_t \Lambda$.
\item Given $X \sim (\pi_0, P)$, one can build $\widehat{X}_t$ from $X_0, \dots, X_t$ and randomness independent of~$X$ so that
$\widehat{X}\sim (\pih_0, \widehat{P})$ and the conditional law of $X_t$ given $(\Xh_0, \dots, \Xh_t)$ has probability mass function given by the $\Xh_t$-row of~$\gL$:
\begin{equation}
\label{linking}
\Lc(X_t\,|\,\Xh_0, \dots, \Xh_t) = \gL(\Xh_t, \cdot), \quad t = 0, 1, 2, \dots.
\end{equation}
\end{itemize}
We call this last consequence \emph{sample-path linking}, and will explain next, once and for all, (a)~how it is done and 
(b)~why it is useful for hitting-time (or mixing-time) constructions.  We will then have no need to repeat this discussion when we turn to applications, each of which will therefore culminate with the explicit construction of an intertwining (or at least of a 
quasi-intertwining, as discussed in \refS{S:quasi}).

Whenever we have an intertwining of $(\pi_0, P)$ and $(\pih_0, \Ph)$, Section~2.4 of the strong stationary duality paper~\cite{DFSST} by Diaconis and Fill gives a family of ways to create sample-path linking.  Here is one~\cite[eq.~(2.36)]{DFSST}, with
$\Delta := \Ph \gL = \gL P$:
\begin{itemize}
\item Set $\Xh_0 \leftarrow \hat{x}_0$ with probability $\pih_0(\hat{x}_0) \gL(\hat{x}_0, x_0) / \pi_0(x_0)$.
\item Inductively, for $t \geq 1$, set $\Xh_t \leftarrow \hat{x}_t$ 
with probability 
$$
\Ph(\hat{x}_{t - 1}, \hat{x}_t) \gL(\hat{x}_t, x_t) / \Delta(\hat{x}_{t - 1}, x_t).
$$
\end{itemize}

Suppose $(\pi_0, P)$ and $(\pih_0, \Ph)$ are intertwined and that, given $X \sim (\pi_0, P)$, we have created linked sample paths for
$\Xh \sim (\pih_0, \Ph)$,
as at~\eqref{linking}.
Suppose further that there are states, call them~$0$ and~$\zh$, such that~$0$ (respectively, $\zh$) is the unique absorbing state 
for~$P$ (resp.,\ $\Ph$) and that
\begin{equation}
\label{abs}
\gL \delta_0^T = \delta_{\zh}^T,
\end{equation}
\ie, that $\gL(\zh, 0) = 1$ and $\gL(\xh, 0) = 0$ for $\hat{x} \neq \zh$.
Then, for the bivariate process $(\Xh, X)$, we see that absorption times agree: $T_0(X) = T_{\zh}(\Xh)$.
For a parallel explanation of how sample-path linking can be used to connect the \emph{mixing} time for an ergodic primary chain with a hitting time for a dual chain, consult \cite{DFSST}; very closely related is the FMMR perfect sampling 
algorithm~\cite{MR99g:60113,MR2001m:60164}.

\subsection{Strategy for absorption-time decompositions}
\label{S:strategy}

The two hitting-time theorems discussed in Sections~\ref{S:lace}--\ref{S:V} both concern ergodic Markov chains.  However, since for these theorems we have no interest in the chain after the specified target state~$0$ has been hit, the hitting-time distribution for such a chain is the same as the absorption-time distribution for the corresponding chain for which the target state is converted to absorbing by replacing the row of $P$ corresponding to state~$0$ by the row vector $\delta_0$.  

It should also be noted that hitting-time theorems and stochastic constructions are easily extended to hitting times
of general subsets~$A$, by the standard trick of collapsing~$A$ to a single state. 

Here is then a general strategy for obtaining a decomposition of the time to absorption in state~$0$ of a Markov chain $X \sim (\pi_0, P)$ from a decomposition of its distribution:

\begin{enumerate}

\item[1.] Discover another chain $\Xh \sim (\pih_0, \Ph)$ for which the sample-point-wise decomposition of the time to absorption in state~$\zh$ is \emph{clearly} of the form specified for~$X$.  (For example, for a pure-death chain started at~$d$ with absorbing state $\zh = 0$, the time to absorption is clearly the sum of independent Geometric random variables.)  

\item[2.] Find a link~$\gL$ that intertwines $(\pi_0, P)$ and $(\pih_0, \Ph)$.

\item[3.] Prove the condition~\eqref{abs}.

\item[4.] Conclude from the preceding discussion that (after sample-path linking) $T_0(X) = T_{\zh}(\Xh)$ and use the 
sample-point-wise decomposition for $T_{\zh}(\Xh)$ as the decomposition for $T_0(X)$.

\end{enumerate}

\noindent An early use of our strategy (adapted for mixing times, rather than absorption times) was in connection with the theory of 
strong stationary duality~\cite{DFSST}, for which the fullest development has resulted in the case of set-valued strong stationary duality (see especially \cite[Secs.\ 3--4]{DFSST} and~\cite{MR99g:60113}; very closely related is the technique of \emph{evolving sets}~\cite{MR2198701}).  For a very recent application to hitting times and fastest strong stationary times for birth and death chains, see~\cite{JAFBD} and~\cite{MR2530104}.

\subsection{Quasi-intertwinings}
\label{S:quasi}

Suppose that the (algebraic) intertwining conditions
$\gL P = \Ph \gL$ and $\pi_0 = \pih_0 \gL$
hold for some \emph{not necessarily stochastic} matrix~$\gL$ with rows summing to unity.
We call this a \emph{quasi-intertwining} of $(\pi_0, P)$ and $(\pih_0, \Ph)$
by the \emph{quasi-link} $\gL$.  Then we again have the identities
$\gL P^t = \Ph^t \gL$ and $\pi_t = \pih_t \gL$.  As before, suppose further
that ~\eqref{abs} holds.
Then, although (if~$\gL$ is not stochastic) we cannot do sample-path linking and so cannot achieve $T_0(X) = T_{\zh}(\Xh)$,
we can still conclude that $T_0(X)$ and $T_{\zh}(\Xh)$ have the same distribution, because
$$
\mbox{$\PP(T_0(X) \leq t) = \pi_t(0) = \sum_{\xh} \pih_t(\xh) \gL(\xh, 0) = \pih_t(\zh) = \PP(T_{\zh}(\Xh) \leq t)$}.
$$

\begr
\label{R:prod}
The following easily-verified observations will be used in our application in \refS{S:lace}.

(a)~If $\gL_1$ is a quasi-link providing a quasi-intertwining of $(\pi_0, P)$ and $(\pi^*_0, P^*)$ and $\gL_2$ is similarly a quasi-link from $(\pi^*_0, P^*)$ to $(\pih_0, \Ph)$, then $\gL := \gL_2 \gL_1$ is a quasi-link from $(\pi_0, P)$ to 
$(\pih_0, \Ph)$.

(b)~If, additionally, the chains have respective unique absorbing states $0, 0^*, \zh$ and \eqref{abs} holds for~$\gL_1$ and 
for~$\gL_2$ (\ie, $\gL_1 \delta_0^T = \delta_{0^*}^T$ and $\gL_2 \delta_{0^*}^T = \delta_{\zh}^T$), then \eqref{abs} holds also for $\gL$ (\ie, $\gL \delta_0^T = \delta_{\zh}^T$).

(c)~If $\gL_1$ and~$\gL_2$ in~(a) are both links, then so is~$\gL$. 
\enr

\section{An illustrative example:\ Block chains and the Moran model}
\label{S:block}

\subsection{Block chains}
\label{SS:block}

In this section
we warm up to the main applications of Sections \ref{S:lace}--\ref{S:V} by providing a simple application of the technique outlined in \refS{S:intro}.  Let~$P$ be a Markov kernel on finite state space~$\Xc$ with the following block structure: 
\begin{equation}
\label{block}
P = 
\left( 
\begin{array}{cccccc}
P_{00} & P_{01} & P_{02} & \hdots & P_{0k} \\
P_{10} & P_{11} & P_{12} & \hdots & P_{1k} \\
P_{20} & P_{21} & P_{22} & \hdots & P_{2k} \\
\vdots & \vdots &\vdots  & \ddots  & \vdots  \\
P_{k0} & P_{k1} & P_{k2} & \hdots & P_{kk}
\end{array}
\right).
\end{equation}
For $i = 0, \dots, k$, let $\mu_i$ be a Perron left eigenvector of $P_{ii}$ [that is, a nonzero row vector with nonnegative entries such that
$$
\mu_i P_{ii} = \rho(P_{ii}) \mu_i,
$$
where $\rho(A)$ denotes the spectral radius of a matrix~$A$], normalized to sum to~$1$.
It is well known (\eg,\ \cite[Theorem~8.3.1]{HJ}) that such an eigenvector exists; when, additionally, $P_{ii}$ is irreducible, the vector $\mu_i$ is unique (\eg,\ \cite[Theorem~8.4.4]{HJ}) and is often called the quasi-stationary distribution for $P_{ii}$.  We make the following special assumption concerning~$P$:  For every~$i$ and~$j$, the vector $\mu_i P_{ij}$ is proportional to $\mu_j$, say $\mu_i P_{ij} = \Ph(i, j) \mu_j$.  In words, the chain with transition matrix~$P$, started in distribution $\mu_i$ over block~$i$, moves in one step to block~$j$ with probability $\Ph(i, j)$; and, conditionally given that it moves to block~$j$, it ``lands'' in block~$j$ with distribution $\mu_j$.  We note in passing that~$\Ph$ is a ($k + 1$)-by-($k + 1$) matrix, and that $\Ph(i, i) = \rho(P_{ii})$ for every~$i$.  Define a 
($k + 1$)-by-$|\Xc|$ stochastic matrix~$\gL$ by setting
\begin{equation}
\label{blocklink}
\gL := 
\left( 
\begin{array}{cccccc}
\mu_0 & 0 & 0 & \hdots & 0 \\
0 &\mu_1 & 0  &\hdots & 0 \\
0 & 0 & \mu_2  & \hdots & 0 \\
\vdots & \vdots &\vdots  & \ddots  & \vdots  \\
0 & 0&0 & \hdots & \mu_k
\end{array}
\right).
\end{equation}

Now consider a chain~$X$ with transition matrix~$P$ and initial distribution $\pi_0$; suppose that $\pi_0$ is a mixture, say $\sum_{i = 0}^k \pih_0(i) \mu_i$, of the distributions $\mu_i$ (each of which can be regarded naturally as a distribution on the entire state space).

\begp
\label{P:block}
In the block-chain setting described above, $(\pi_0, P)$ and $(\pih_0, \Ph)$ are intertwined by the link~$\gL$.
\enp

\begin{proof}
The proof is a simple matter of checking Definition~\ref{D:twine} by checking that the identity 
$\mu_i P_{ij} \equiv \Ph(i, j) \mu_j$ gives $\gL P = \Ph \gL$ and that the assumption $\pi_0 = \sum_{i = 0}^k \pih_0(i) \mu_i$ gives $\pi_0 = \pih_0 \gL$.
\end{proof}

The sample-path linking developed in \refS{S:twine} is very simple to describe in our present block-chain setting: 
$\Xh_t$ is simply the block ($\in \{0, \dots, k\}$) to which $X_t$ belongs.  This simple description is due to the very simple nature of the link~\eqref{blocklink}; the sample-path linking is more complicated for the applications in Sections \ref{S:lace}--\ref{S:V}.

\subsection{The Moran model}
\label{SS:Moran}

We now apply the block-chain development in the preceding subsection to a Markov chain on partitions of the 
positive integer $n$ introduced in~\cite{mccollab} as a somewhat light-hearted model for collaboration among mathematicians.  Their model is precisely the Moran model from population genetics according to the following 
definition \cite[Definition 2.26]{MR2759587} modified (a)~to switch in natural fashion from continuous time to discrete time and (b)~to limit the description of the state at each unit of time by distinguishing between genes with different labels but otherwise ignoring the values of the labels:
\begin{quote}
A population of~$N$ genes evolves according to the Moran model if at exponential rate ${N \choose 2}$ a pair of genes is sampled uniformly at random from the population, one dies and the other splits in two.
\end{quote}
The chain we will consider here is a simple example of a coalescent chain, a class popularized in the seminal works of Kingman (see for example \cite{king}, \cite{king2}, \cite{king3}).  For a more complete modern picture of the application and study of coalescing chains, see \cite{durr}.

Let~$S$ be a set of~$n$ indistinguishable objects.  (The objects are gene labels in the Moran model and are mathematicians in~\cite{mccollab}.)  The 
Markov chain of interest in~\cite{mccollab} 
is 
more easily described if we make use of the natural bijection between partitions of the integer~$n$ and set partitions of~$S$ obtained by identifying a partition $(n_1, n_2, \ldots, n_r)$ (with $1 \leq r < \infty$ and $n_1 \geq n_2 \geq \cdots \geq n_r \geq 1$) of the integer~$n$ with a partition of~$S$ into~$r$ indistinguishable subsets where the subsets are of sizes $n_1, n_2, \dots, n_r$.  Accordingly, if the present state of the Markov chain is the partition 
$(n_1, n_2, \ldots, n_r)$, then, viewing this as a partition of~$S$, uniformly select an ordered pair of unequal objects from~$S$, and suppose that the first and second objects are currently in subsets of size $n_i$ and $n_j$, respectively.   The transition is realized by moving the second object from the second subset to the first, resulting in two new subsets of sizes $n_i + 1$ and $n_j - 1$.  For example, if $n=6$ and the Markov chain is currently in the partition $(4,1,1)$, then with probability $8/30$ the chain transitions to $(5,1)$; with probability $2/30$, to $(4,2)$; with probability $8/30$, to $(3,2,1)$; and with probability $12/30$ the chain stays in $(4,1,1)$.  The authors of~\cite{mccollab} are concerned with the distribution of the hitting time of state $(n)$, the (absorbing) single-part partition, when the chain is begun in the 
$n$-parts partition $(1, \dots, 1)$.

Collecting partitions into blocks, where block~$i$ contains all partitions with~$i$ parts ($1 \leq i \leq n$), it is clear that the transition matrix~$P$ for this chain is block upper bidiagonal, since a one-step transition can only change the number of parts by~$0$ or $-1$. 
For example, in the simple case $n=4$, one possible ordering of the partitions by decreasing number of parts is $(1,1,1,1)$, $(2,1,1)$, $(2,2)$, $(3,1)$, $(4)$ and the corresponding $P$ is given by 
$$
P= \left( \begin{array}{cccc}
P_{44} & P_{43} & 0 & 0 \\
0 & P_{33} & P_{32} & 0 \\
0 & 0 & P_{22} & P_{21} \\
0 & 0 & 0  & P_{11} \end{array} \right)
=\left( \begin{array}{c|c|cc|c}
0 & 1 & 0 & 0 & 0\\
\hline 0 & 6/12 & 2/12 & 4/12 & 0\\
\hline 0 & 0 & 4/12 & 8/12 & 0\\
0 & 0 & 3/12 & 6/12 & 3/12\\
\hline 0 & 0 & 0 & 0 & 1\end{array} \right).
$$

We will make use of results in~\cite{mccollab} to see that~$P$ satisfies the assumptions of \refS{SS:block}.  To describe the results, let $1 \leq t \leq n$ and consider a partition~${\bf r}$ of~$n$ with~$t$ parts.  For $i=1,\ldots,n$, let $r_i$ be the number of parts of $\bf{r}$ equal to~$i$, so that $\sum_i i r_i = n$.  Let $m_{{\bf r}} := {t \choose {r_1, r_2, \dots, r_n}}$.  Define $\mu_t$ to be the row vector, supported on partitions of size $t$, whose entry corresponding to partition~${\bf r}$ is ${{n - 1} \choose {t - 1}}^{-1} m_{{\bf r}}$.  For $1 \leq t \leq n$, define $\gl_t := 1-\frac{t (t - 1)}{n (n - 1)}$.  
For example, if $n = 4$ and $t = 2$ and partitions with $2$ parts are listed (as above) in the order $(2, 2)$, $(3, 1)$, then $\mu_2 = (1 / 3, 2/ 3)$ and $\gl_2 =  5 / 6$.  Let the dual state space be ordered $n, n - 1, \dots, 1$ (corresponding naturally to the ordering we have used for the primary state space).  Define~$\gL$ by \eqref{blocklink}, but with the nonzero blocks correspondingly in \emph{decreasing} order $\mu_n, \mu_{n - 1}, \dots, \mu_1$ of subscript.  Let
$$
\Ph := \left( \begin{array}{ccccc}
\lambda_n & 1-\lambda_n & 0 & \cdots & 0\\
0 & \lambda_{n-1} & 1-\lambda_{n-1} & \cdots & 0 \\
\vdots & \vdots & \vdots & \ddots & \vdots \\
0 &  0 & 0 & \cdots  & \lambda_1 \end{array} \right).
$$
From Theorems~2 and~4 of~\cite{mccollab} we can use our Proposition~\ref{P:block} to derive easily the following intertwining result.

\begp
\label{P:partitions_intertwining}
Let $\pi_0$ be unit mass at the partition $(1, \dots, 1)$.  Then $(\pi_0,P)$ and $(\delta_n, \Ph)$ are intertwined by the link~$\gL$.
\enp

As a direct consequence of Proposition~\ref{P:partitions_intertwining}, we get the following hitting-time result.
\begc
\label{C:partitions} 
For fixed~$n$, the law of the time to absorption in state $(n)$ for the partitions-chain started in $(1, \dots, 1)$ is that of 
$\sum_{t=2}^n Y_{n, t}$ where $Y_{n, t} \sim \mathrm{Geo}(1 - \gl_{n, t})$, with $\gl_{n, t} = 1-\frac{t (t - 1)}{n (n - 1)}$, are independent.
\enc

In \cite{mccollab}, the authors were able to identify a simple expression for the expected hitting time of state $(n)$ when the chain is started in $\pi_0 = \delta_{(1, \dots, 1)}$, and challenged the reader to discover a pattern for the associated variance.  The authors found that $\EE_{\pi_0}\,T_{(n)}=(n-1)^2$.  This is confirmed by our \refC{C:partitions}, as 
$$
\EE_{\pi_0}\,T_{(n)} = \EE\,\sum_{k=2}^n Y_{n, k} = \sum_{k=2}^n \frac{n(n-1)}{k(k-1)}=(n-1)^2.
$$ 
Similarly, letting $H^{(2)}_n := \sum_{j = 1}^n j^{-2}$ denote the $n$th second-order harmonic number, we find
\begin{align*}
\Var_{\pi_0}\,T_{(n)} 
 &= \Var\,\sum_{k=2}^n Y_{n, k} = \sum_{k=2}^n \left(\bigg[\frac{n(n-1)}{k(k-1)}\bigg]^2-\frac{n(n-1)}{k(k-1)}\right) \\
 &= 2 [n (n - 1)]^2 H^{(2)}_n - (n - 1)^2 (3 n ^2 - 2 n + 2) \\
 &\sim (\mbox{$\frac{\pi^2}{3}$} - 3) n^4\mbox{\ \ as $n \to \infty$}.
\end{align*}

Proceeding further, it is not difficult to show that, when the partition chain is started in $\pi_0$, we have
$$
\frac{T_{(n)}}{n^2}\,\Lto\,S_{\infty} := \sum_{j = 2}^{\infty} X_j
$$
for independent random variables
$$
X_j \sim \mbox{Exp}(j (j - 1)), \quad j = 2, 3, \dots,
$$
with convergence of moments of all orders and (pointwise) of moment generating functions.  We omit the details.

\section{Hitting times and interlacing eigenvalues}
\label{S:lace}


\subsection{Brown's theorem}
\label{S:Brown1}

Our next construction will provide insight into a hitting-time result of Mark Brown \cite{MB} that elegantly connects the hitting time of a state for a reversible Markov chain started in stationarity to the celebrated interlacing eigenvalues theorem of linear algebra 
(see, \eg, Theorem~4.3.8 in \cite{HJ}).  We now proceed to set up Brown's result. 

Let $(X_t)_{t=0,1,2,\ldots}$ be a time-reversible ergodic discrete-time Markov chain with transition matrix $P$ on finite state space 
$\Xc=\lbrace 0,1, \ldots,n\rbrace$ with stationary distribution $\pi$.  If we let $D := \mbox{diag}(\pi(0),\ldots,\pi(n))$, then reversibility of $P$ implies that $S := D^{1/2} P D^{-1/2}$ is a symmetric matrix and thus $P$ has a real spectrum and a basis of real eigenvectors.  Denote the eigenvalues of $P$ 
by $1=\theta_0>\theta_1\geq\cdots\geq\theta_n>-1$.

Recall that, for any matrix~$A$, the principal submatrix of~$A$ obtained by deleting row~$0$ and column~$0$ is denoted $A_0$.  Denote the eigenvalues of $P_0$ by $\eta_1\geq \cdots \geq \eta_n$.  
Note that $S_0 = D_0^{1/2} P_0 D_0^{-1/2}$ is symmetric; by the interlacing eigenvalues theorem for bordered symmetric matrices 
(\eg, \cite[Theorem 4.3.8]{HJ}), the eigenvalues of~$P$ and $P_0$ 
interlace: $\theta_0>\eta_1\geq\theta_1\geq\cdots\geq\eta_n\geq\theta_n$.  
Cancel out common pairs of eigenvalues from the spectra $\sigma(P)$ and 
$\sigma(P_0)$ as follows. 
Consider $\sigma(P)$ and $\sigma(P_0)$ as multisets and remove the multiset $\sigma(P)\cap\sigma(P_0)$ from each of 
$\sigma(P)$ and $\sigma(P_0)$.  Relabel the reduced set of eigenvalues of $P$ as $\lbrace\lambda_i\rbrace_{i=0}^{r}$ with 
$\lambda_0\geq \lambda_1\geq\cdots\geq\lambda_r$ and of $P_0$ as $\lbrace\gamma_i\rbrace_{i=1}^{r}$ with $\gamma_1\geq\cdots\geq\gamma_r$.  After this cancellation, it is clear that the remaining eigenvalues strictly  
interlace: $1=\lambda_0>\gamma_1>\lambda_1>\cdots>\gamma_r>\lambda_r > -1$.

In what follows we need to assume 
that $\lambda_r \geq 0$. This is a rather harmless assumption, since we can if necessary shift attention from~$P$ to $\frac{1}{1+c}(P+cI)$ for suitably large~$c$.
  
Brown found it convenient to work in continuous time, but he could just as easily have proven the analogous result in our present discrete-time setting.  To state Brown's original continuous-time result, we make use of a very standard technique to produce a continuous-time chain from a discrete-time chain, by using independent and identically distributed (iid) Exp(1) holding times (in place of unit times) between transitions.  This continuous-time chain is sometimes called the \textit{continuization} of the Markov chain with one-step transition matrix $P$, and it has generator matrix $Q=P-I$.

Brown's original result can be stated as follows.

\begin{theorem}  
\label{T:Brown1c}
Let $Q=P-I$ be the generator of the continuization of a Markov chain with one-step transition matrix~$P$.  In the continuized chain, the distribution (or law) $\cL_{\pi} T_0$ of the hitting time of state~$0$ when the chain is started in stationarity, is that of $\sum_{i=1}^r Y_i$, where $Y_1, Y_2,\ldots,Y_r$ are independent and the distribution of $Y_i$ is the ``modified Exponential'' mixture
$$
Y_i \sim  \frac{1-\gamma_i}{1-\lambda_i} \delta_0 + \left( 1- \frac{1-\gamma_i}{1-\lambda_i} \right) \mathrm{Exp}(1 - \gamma_i)
$$
of unit mass at~$0$ and the Exponential distribution with parameter $1 - \gamma_i$;
the $\lambda$'s and $\gamma$'s are defined as above. 
\end{theorem}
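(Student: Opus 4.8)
The plan is to run the four-step program of \refS{S:strategy}: produce a dual chain whose time to absorption is transparently $\sum_{i=1}^r Y_i$, build an intertwining link (or a quasi-link, in the sense of \refS{S:quasi}) from the continuized primary chain to it, verify the absorption identity~\eqref{abs}, and read off the theorem.

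\emph{The dual chain.}\ Set $\mu_i:=1-\gamma_i$ and $p_i:=(1-\gamma_i)/(1-\lambda_i)$ for $i=1,\dots,r$; since $1=\lambda_0>\gamma_1>\lambda_1>\cdots>\gamma_r>\lambda_r\ge 0$, we have $\mu_i\in(0,1)$ and $p_i\in(0,1)$. Let $\widehat Q$ be the generator on $\{0,1,\dots,r\}$, with state~$0$ absorbing, of the continuous-time chain that leaves state $i\ge 1$ at total rate $\mu_i$ and, on leaving~$i$, jumps \emph{downward} to state $j\in\{1,\dots,i-1\}$ with probability $(1-p_j)\,p_{j+1}p_{j+2}\cdots p_{i-1}$ and to state~$0$ with probability $p_1p_2\cdots p_{i-1}$ (so from state~$1$ it moves to~$0$). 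Start $\Xh$ from $\pih_0$, where $\pih_0(j)=(1-p_j)\,p_{j+1}\cdots p_r$ for $1\le j\le r$ and $\pih_0(0)=p_1\cdots p_r$. Tracing a sample path of $\Xh$, one sees that each level $i\in\{1,\dots,r\}$ carries exactly one independent coin toss --- ``skip'' with probability $p_i$ --- tossed at time~$0$ for the topmost levels and when the chain first drops past level~$i$ otherwise, and that level~$i$ accrues an independent $\mathrm{Exp}(\mu_i)$ sojourn precisely when its toss comes up ``do not skip''. Hence $T_0(\Xh)$ is visibly distributed as $\sum_{i=1}^r Y_i$ with $Y_i\sim p_i\delta_0+(1-p_i)\mathrm{Exp}(1-\gamma_i)$, which is Step~1 of the program.

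\emph{The link.}\ The substance is to produce an $(r+1)$-by-$(n+1)$ matrix $\gL$ with unit row sums for which $\gL(P-I)=\widehat Q\,\gL$, $\pi=\pih_0\gL$, and the absorption identity $\gL\delta_0^T=\delta_0^T$ of~\eqref{abs} (here, as in \refS{S:strategy}, $P$ denotes the given chain with state~$0$ made absorbing; everything may equally be carried out in discrete time via $\gL P=\Ph\gL$ with $\Ph:=I+\widehat Q$, continuization being harmless since $\gL P=\Ph\gL$ forces $\gL(P-I)=(\Ph-I)\gL$). I expect $\gL$ to be assembled as a composition $\gL=\gL_2\gL_1$ of two quasi-links --- which is exactly why \refR{R:prod} has been flagged for use here --- passing through a \emph{birth-and-death} chain $P^*$ on $\{0,\dots,r\}$ whose spectrum is $\{\lambda_0,\dots,\lambda_r\}$ and whose principal submatrix $P^*_0$ has spectrum $\{\gamma_1,\dots,\gamma_r\}$: thus $\gL_1$ intertwines $(\pi,P)$ with $(\pi^*,P^*)$, effecting the drop in dimension by absorbing the eigenvalues common to $P$ and $P_0$, and $\gL_2$ intertwines $(\pi^*,P^*)$ with the dual $(\pih_0,\widehat Q)$. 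That such a birth-and-death $P^*$ exists, given only that the $\lambda$'s and $\gamma$'s strictly interlace, is the classical inverse spectral theorem for Jacobi matrices, and it is precisely here that the interlacing-eigenvalues theorem does the real work. Then \refR{R:prod}(a) makes $\gL$ a quasi-link from $(\pi,P)$ to $(\pih_0,\widehat Q)$, \refR{R:prod}(b) propagates~\eqref{abs} to $\gL$ from the corresponding statements for $\gL_1$ and $\gL_2$, and \refR{R:prod}(c) promotes $\gL$ to a genuine link if $\gL_1$ and $\gL_2$ are links --- which, thanks to the assumption $\lambda_r\ge 0$ (it forces every $\gamma_i\ge 0$, keeping the discrete-time versions nonnegative), I expect to be so, making the sample-path linking of \refS{S:twine} available; absent that assumption one still obtains a quasi-intertwining, and hence the distributional conclusion.

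\emph{Conclusion.}\ With $\gL$ in hand, \eqref{abs} is a one-line check on its $0$th column, and the discussion following~\eqref{linking} (or, in the quasi-link case, the display in \refS{S:quasi}) gives $\cL_\pi T_0=\cL\,T_0(\Xh)=\cL\bigl(\sum_{i=1}^r Y_i\bigr)$, which is the assertion of the theorem; when $\gL$ is stochastic the equality $T_0(X)=T_0(\Xh)$ holds sample path by sample path, yielding the promised construction. The main obstacle is the link --- concretely, discovering $\gL_1$ and verifying $\gL_1 P=P^*\gL_1$ --- which comes down to a determinant/partial-fraction identity relating the ratio $\det(xI-S_0)/\det(xI-S)$ for the reversible chain (whose zeros, once the spectrum common to $P$ and $P_0$ is cancelled, are the $\gamma_i$) to the spectral data of the Jacobi matrix underlying $P^*$: in short, to converting the interlacing theorem into an intertwining.
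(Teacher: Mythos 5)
Your dual chain $\Xh$ is essentially the paper's $\Ph$ in disguise (your $p_i$ is the paper's $\rho_i$, and your downward jump probabilities match $\pih_{i-1}$), and your four-step strategy and use of \refR{R:prod} to compose two (quasi-)links are exactly right in outline. The genuine departure — and the genuine gap — is the choice of intermediate chain. The paper does \emph{not} pass through a birth-and-death chain obtained from the inverse spectral theorem for Jacobi matrices; it passes through an explicitly constructed \emph{star chain} $P^*$ on $\{0,\dots,r\}$ with hub at $0$, $P^*(i,i)=\gamma_i$, $P^*(i,0)=1-\gamma_i$ (\refL{L:p}). That choice is not cosmetic: because $P^*_0$ is \emph{diagonal}, the paper can prove the ``spoke-breaking'' identity $\nu_i P^*=\gamma_i\nu_i+(1-\gamma_i)\pi^*_{i-1}$ (\refT{T:heart}) by direct computation, and this identity is precisely what furnishes $\gL_2$ (\refT{T:ll}). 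With your tridiagonal $P^*$, the analogous intermediate intertwining $\gL_2$ from $(\pi^*,P^*)$ to the dual $(\pih_0,\Ph)$ is exactly as hard as Brown's theorem specialized to birth-and-death chains — you'd be proving the theorem for a class of chains no more tractable than the one you started with. Your proposal simply asserts this $\gL_2$ exists without constructing it, and I don't see how to construct it short of re-deriving the star-chain machinery. Also note that the inverse spectral theorem gives a symmetric Jacobi matrix with the prescribed spectra, but converting it into a \emph{stochastic} tridiagonal matrix (a bona fide BD kernel) requires an additional normalization argument you haven't supplied, and the eigenvalue constraint alone ($-1<\lambda_r$, $\lambda_0=1$) does not trivially guarantee stochasticity.

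A second, smaller error: you claim the hypothesis $\lambda_r\ge0$ forces $\gL_1$ and $\gL_2$ to be genuine links. This is false. In the paper's construction, $\gL_2$ \emph{is} always a link (the $\nu_i$ are probability distributions), but $\gL_1$ is built from the eigenvectors $u^i_j$ of $P_0$ (\refT{T:link2}), whose entries are in general of mixed sign; the paper explicitly notes that $\gL_1$ (hence $\gL=\gL_2\gL_1$) is in general only a \emph{quasi-link}, and is a bona fide link only in special cases such as star or block-star given chains. The purpose of $\lambda_r\ge0$ is merely to ensure nonnegative eigenvalues so that the $\mathrm{Geo}(1-\gamma_i)$ and the construction of $P^*$ are well posed, not to make $\gL$ stochastic. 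Finally, a stylistic difference: the paper proves the discrete-time \refT{T:Brown1d} first and transfers to continuous time via \refL{L:continuous-discrete}; you work in continuous time and assert the transfer is harmless — this part is fine.
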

We find it more convenient to work in discrete time, where the corresponding theorem (involving Geometric, rather than Exponential, distributions) is as follows.
\begin{theorem}
\label{T:Brown1d}  
In the discrete-time setting outlined above, $\cL_{\pi} T_0$ is the distribution of $\sum_{i=1}^r Y_i$, where 
$Y_1, Y_2, \ldots,Y_r$ are independent with the following ``modified Geometric'' distributions:
\begin{equation}
\label{yidist}
Y_i \sim  \frac{1-\gamma_i}{1-\lambda_i} \delta_0 + \left( 1- \frac{1-\gamma_i}{1-\lambda_i} \right) \mathrm{Geo}(1 - \gamma_i).
\end{equation}
\end{theorem}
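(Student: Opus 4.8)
The plan is to run the four-step strategy of \refS{S:strategy}. First pass from~$P$ to the absorbing kernel~$\Pt$ obtained by replacing row~$0$ of~$P$ by~$\delta_0$; then $\cL_\pi T_0$ is the law of the time to absorption of $(\pi,\Pt)$ in state~$0$, and (listing state~$0$ first) $\Pt$ is lower block-triangular with diagonal blocks $1$ and $P_0$, so the eigenvalues of~$\Pt$ are $1,\eta_1,\dots,\eta_n$. For Step~1 I would use the dual state space $\{0,1,\dots,r\}$ with~$0$ absorbing and, writing $a_i:=(1-\gamma_i)/(1-\lambda_i)$ (which lies in $(0,1)$ precisely because of the strict interlacing together with $\lambda_r\ge0$, and which is the mixing weight in~\eqref{yidist}), take~$\Ph$ to be the monotone ``pure-death-with-skips'' kernel: from level $i\ge1$ the chain holds with probability $\gamma_i$ and, on leaving (probability $1-\gamma_i$), jumps to the highest level $j<i$ that is not ``skipped'', each level~$j$ being independently skipped with probability~$a_j$; explicitly $\Ph(i,i)=\gamma_i$ and $\Ph(i,i-\ell)=(1-\gamma_i)\,a_{i-1}\cdots a_{i-\ell+1}\,(1-a_{i-\ell})$ for $1\le\ell\le i$, with the conventions that an empty product is~$1$ and $a_0:=0$. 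Starting the dual from $\pih_0$ with $\pih_0(m)=(1-a_m)\prod_{k>m}a_k$ for $m\ge1$ and $\pih_0(0)=\prod_{k\ge1}a_k$, a short telescoping computation shows that the rows of~$\Ph$ sum to~$1$, that the number~$Y_i$ of steps the dual spends at level~$i$ equals~$0$ exactly when level~$i$ is never visited (probability~$a_i$) and is $\mathrm{Geo}(1-\gamma_i)$ otherwise, and that $Y_1,\dots,Y_r$ are \emph{independent}. So $T_0(\Xh)=\sum_{i=1}^r Y_i$ has \emph{visibly} the law~\eqref{yidist}, and~$\Ph$ has eigenvalues $1,\gamma_1,\dots,\gamma_r$.

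The crux is Step~2, producing a link~$\gL$ that intertwines $(\pi,\Pt)$ and $(\pih_0,\Ph)$. A natural route, via \refR{R:prod}, is to write $\gL=\gL_2\gL_1$: let $\gL_1$ intertwine $(\pi,\Pt)$ with an intermediate monotone chain $P^*$ on $\{0,1,\dots,n\}$ carrying the full spectrum $1,\eta_1,\dots,\eta_n$ of~$\Pt$ on its diagonal (a ``triangularization'' of the reversible, now absorbing, chain), and let $\gL_2$ then \emph{collapse} those $\eta_k$ shared with $\sigma(P)$, leaving exactly the chain~$\Ph$ of Step~1 with the surviving eigenvalues relabelled $\gamma_1>\cdots>\gamma_r$. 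The entries of~$\gL_2$ are forced to be ratios of characteristic-polynomial type in the $\lambda$'s and $\gamma$'s, and it is precisely here that the \emph{strict} interlacing $1=\lambda_0>\gamma_1>\lambda_1>\cdots>\gamma_r>\lambda_r\ge0$ (the hypothesis $\lambda_r\ge0$ being what makes all $\gamma_i>0$, hence genuine Geometric parameters) forces these entries to have the right signs and row sums, so that $\gL_2$, hence~$\gL$, is a legitimate (quasi-)link; and when the link so constructed is in fact stochastic---which the hypothesis $\lambda_r\ge0$ should secure---sample-path linking applies and yields the pathwise construction announced in the introduction. Reversibility of~$P$ enters only through the symmetry of $D^{1/2}PD^{-1/2}$, which is what supplies both the reality of the spectra and the interlacing theorem for $S_0$ inside its bordering $S$; this is the promised connection between hitting times and interlacing eigenvalues.

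Steps~3 and~4 are then routine. Condition~\eqref{abs}, $\gL\,\delta_0^T=\delta_0^T$, follows from \refR{R:prod}(b) once it is checked for~$\gL_1$ and~$\gL_2$ separately---each merely says that state~$0$ of the primary chain maps to the absorbing bottom level of the dual---and $\pi=\pih_0\gL$ is checked along the way. By the computation in \refS{S:quasi}, $\PP_\pi(T_0\le t)=\pi_t(0)=\pih_t(0)=\PP_{\pih_0}(T_0(\Xh)\le t)$ for all~$t$, so $\cL_\pi T_0$ is the law of $\sum_{i=1}^r Y_i$, which is the assertion~\eqref{yidist}. I expect Step~2---writing~$\gL$ down explicitly and verifying $\gL\Pt=\Ph\gL$, nonnegativity, the row sums, and~\eqref{abs}---to be the main obstacle; all of the theorem's linear-algebra content, in particular the ``magical'' cancellation of common eigenvalues of $P$ and $P_0$, is concentrated there.

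As an independent check on the distributional statement (though not on the pathwise construction), one can bypass the dual: for the stationary start, $\EE_\pi z^{T_0}=1-(1-z)\,\pi'(I-zP_0)^{-1}\vec{1}$ with $\pi':=(\pi(1),\dots,\pi(n))$; diagonalizing~$P_0$ (real spectrum, by reversibility) gives $\EE_\pi z^{T_0}=1-(1-z)\sum_{k=1}^n w_k/(1-z\eta_k)$ with weights $w_k\ge0$ summing to $1-\pi(0)$. When $\eta_k$ is also an eigenvalue of~$P$, the associated $S_0$-eigenvector extends by a zero in the $0$-coordinate to an $S$-eigenvector orthogonal to the Perron vector $\pi D^{-1/2}$, which forces $w_k=0$; the strict interlacing then pins the surviving weights down as the $a_i$, and one recovers $\EE_\pi z^{T_0}=\prod_{i=1}^r\frac{1-\gamma_i}{1-\lambda_i}\cdot\frac{1-\lambda_i z}{1-\gamma_i z}$, which is the probability generating function of $\sum_{i=1}^r Y_i$ with the $Y_i$ of~\eqref{yidist}.
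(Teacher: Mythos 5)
Your Step~1 is correct and matches the paper's \refL{L:hat} exactly: with $a_i=\rho_i=(1-\gamma_i)/(1-\lambda_i)$, your ``pure-death-with-skips'' kernel is the paper's~$\Ph$, your $\pih_0$ is the paper's $\pih=\pih_r$, and the pathwise decomposition $\Th_0=\sum_i Y_i$ with independent modified Geometrics is precisely the content of that lemma. Your closing ``independent check'' via $\EE_\pi z^{T_0}=1-(1-z)\,\pi'(I-zP_0)^{-1}\vec{1}^T$ and the positivity of the weights $w_k$ is also sound in outline and is essentially Brown's original analytic argument (which the paper in turn leans on via its equation~\eqref{hit1}), though the one-line claim that a common eigenvalue of $P$ and $P_0$ admits an $S_0$-eigenvector extending by a zero to an $S$-eigenvector is exactly what requires care; the paper avoids that by comparing the two expansions \eqref{hit1}--\eqref{hit2} term by term.

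The genuine gap is Step~2, which you acknowledge but which is where all the work lives. Two concrete problems. First, your proposed factorization $\gL=\gL_2\gL_1$ is not the paper's: you take the intermediate chain~$P^*$ to be a ``triangularization'' on $\{0,\dots,n\}$ carrying the full spectrum $1,\eta_1,\dots,\eta_n$, followed by a ``collapse.'' The paper instead constructs, directly from the reduced spectra, an explicit reversible $r$-spoke \emph{star chain} on $\{0,\dots,r\}$ (\refL{L:p}), whose absorbing version has spectrum $\{1,\gamma_1,\dots,\gamma_r\}$ already; $\gL_1$ is a quasi-link $(\pi,P_{\rm abs})\to(\pi^*,P^*_{\rm abs})$ built from the left eigenvectors of $P_{\rm abs}$ with a nontrivial sign/weight analysis (\refT{T:link2} and its lemma), and $\gL_2$ is the genuine $(r+1)$-by-$(r+1)$ link built from the ``spoke-breaking'' decomposition $\pi^*_i=\rho_i\pi^*_{i-1}+(1-\rho_i)\nu_i$ with $\nu_iP^*=\gamma_i\nu_i+(1-\gamma_i)\pi^*_{i-1}$ (\refT{T:heart} and \refT{T:ll}). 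You would need to actually produce your~$\gL_1,\gL_2$ and verify nonnegativity, row sums, the intertwining identity, and \eqref{abs}; none of that is done, and the ``ratios of characteristic-polynomial type'' hand-wave does not establish the signs. Second, your parenthetical that the hypothesis $\lambda_r\ge0$ ``should secure'' stochasticity of the link is \emph{false}: the paper states explicitly (after \refT{T:biglink}) that $\gL$ is not nonnegative in general, and is a bona fide link only for special chains such as star chains and block star chains. The role of $\lambda_r\ge0$ is only to make each $\gamma_i\in(0,1)$ a legitimate Geometric parameter and to keep $P^*(0,0)>0$; it does not give you sample-path linking. The distributional claim survives because a quasi-link suffices (\refS{S:quasi}), but the pathwise construction you assert is available only in those special cases.
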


We have our choice of working in discrete or continuous time because, fortunately, for any finite-state Markov chain and any target state~$0$ there is a simple relationship between hitting-time distributions in the two cases.  Let $T_0^{\mathrm{d}}$ be the time to hit state~$0$ in the discrete-time chain 
$(X_t)_{t=0,1,2,\ldots}$ with transition matrix $P$, and let $T_0^{\mathrm{c}}$ be the corresponding hitting time in the continuized chain.  Then the Laplace transform 
$\psi_{T_0^{\mathrm{c}}}(s):=\mathbf{E}\,\exp(-s\,T_0^{\mathrm{c}})$ and the probability generating function 
$G_{T_0^{\mathrm{d}}}(z):=\mathbf{E}\,z^{T_0^{\mathrm{d}}}$ of the hitting times satisfy a simple relationship:

\begl
\label{L:continuous-discrete}
For any finite-state discrete-time Markov chain and any target state~$0$, we have the following identity relating the distributions of the hitting time of state~$0$ for the continued chain and the discrete-time chain: 
$$
\psi_{T_0^\mathrm{c}}(s)=G_{T_0^\mathrm{d}} \left( \frac{1}{1+s} \right), \ s\geq 0.
$$
\enl
\begin{proof}
Let $X_i\sim \text{Exp}(1)$ be iid and independent of $T_0^{\mathrm{d}}$.  By definition of the continuized chain, we have $T_0^{\mathrm{c}}\Leq \sum_{i=1}^{T_0^{\mathrm{d}}} X_i$.  Then
$$
\psi_{T_0^\mathrm{c}}(s) = \mathbf{E}\ \mathrm{exp}(-s T_0^{\mathrm{c}})
  = \mathbf{E}\ \mathrm{exp}\left(-s \sum_{i=1}^{T_0^{\mathrm{d}}} X_i\right)
  = \mathbf{E}\left(\frac{1}{1+s}\right)^{T_0^{\mathrm{d}}}
  = G_{T_0^{\mathrm{d}}}\left(\frac{1}{1+s}\right).
$$
\end{proof}
This lemma allows us to easily derive \refT{T:Brown1c} from \refT{T:Brown1d} (and vice versa), since for $s \geq 0$ we have
\begin{align*}
\psi_{T_0^\mathrm{c}}(s) = G_{T_0^{\mathrm{d}}}\left(\frac{1}{1+s}\right) 
  &= \prod_i \left[ \frac{1 - \gamma_i}{1 - \lambda_i} + \left( 1 - \frac{1 - \gamma_i}{1 - \lambda_i} \right)
\frac{\frac{1 - \gamma_i}{1 + s}}{1 - \frac{\gamma_i}{1 + s}} \right] \\
  &= \prod_i \left[ \frac{1 - \gamma_i}{1 - \lambda_i} + \left( 1 - \frac{1 - \gamma_i}{1 - \lambda_i} \right) 
\frac{1 - \gamma_i}{1 -\gamma_i + s} \right].
\end{align*}

Our 
main result of \refS{S:lace} is another proof for~\refT{T:Brown1d}, culminating in our \refT{T:biglink} (see also the last paragraph of \refS{S:biglink}). 
Our proof provides---at least when the quasi-link $\Lambda$ we construct is a \emph{bona fide} link---an explicit stochastic construction of the hitting time of state~$0$ from a stationary start as a sum of independent modified Geometric random variables.  We tackle our proof of \refT{T:Brown1d} in two stages:\ in \refS{S:star} we build a certain ``star chain'' (random walk on a  weighted star graph) from the given chain and prove \refT{T:Brown1d} when this star chain is substituted for the given chain, and in \refS{S:starlink} we attempt to ``link" the given chain with the star chain of \refS{S:star}.  In \refS{S:biglink} we combine the results of 
Sections~\ref{S:star}--\ref{S:starlink} and provide our complete proof of \refT{T:Brown1d}.  
We could equally well prove the continuous-time analogues of all of our theorems and then apply the analogous intertwining results outlined in 
Section~2.3 of~\cite{SSDC} to provide (again when~$\Lambda$ is a link) an explicit continuous-time stochastic construction for \refT{T:Brown1c}.  We choose to work in discrete time for convenience and because, we believe, the ideas behind our constructions are easier to grasp in discrete time.

\subsection{A stochastic construction for the star chain}
\label{S:star}

Carrying out step~1 of the four-step strategy outlined in \refS{S:strategy} (finding a chain~$\Xh$ for which the hitting time of state~$\zh$ can be decomposed as a sum of independent modified Geometric random variables) turns out not to be too difficult; this step is carried out later, in \refL{L:hat}.  However, step~2 (finding a link~$\Lambda$ between the given~$X$ and~$\Xh$) proved challenging to us, so we break it down into two substeps, as described at the end of the preceding subsection.  In this subsection we build an ergodic star chain $X^*$ from the given chain~$X$ and show that the Markov semigroups for~$X^*$ (with the target state~$0^*$ converted to absorbing) and~$\Xh$ are intertwined by a link $\Lambda_2$.  
The state spaces for $X^*$ and $\Xh$ will both be $\{0, \dots, r\}$, and the roles of $\zh$ and $0^*$ will both be played by state~$0$.
For the star chain, we make full use of the notation in \refS{S:Brown1}.  The ``star'' has ``hub'' at~$0$ and ``spokes'' terminating at vertices $1, \dots, r$.  
The $r$-spoke star chain we build has previously been constructed in~\cite{AB}.

For the sake of brevity it is convenient to establish some additional notation.  Define 
$$
\rho_i := \frac{1-\gamma_i}{1-\gl_i}\quad\text{for\ }i = 1, \dots, r,
$$
and for $0\leq k\leq r$ define
\begin{equation}
\label{p}
\pi^*_k(i) :=
\begin{cases}
(1 - \rho_i) \prod_{1\leq j\leq k,\ j \neq i} \frac{1-\gamma_j - \rho_j (1-\gamma_i)}{\gamma_i - \gamma_j} & \mathrm{for\ }i = 1, \dots, k \\
\prod_{j = 1}^k \rho_j & \mathrm{for\ }i = 0.
\end{cases}
\end{equation}  
Set $\pi^*_k : = (\pi^*_k(0), \dots, \pi^*_k(k), 0, \dots, 0) \in \mathbb{R}^{r + 1}$
and note that $\pi^*_0 = \delta_0$.
The following lemma lays out the ergodic star chain of interest corresponding to the given chain.
\begl
\label{L:p}
\ \smallskip \\
{\rm (a)}~For all $0\leq k\leq r$ we have $\pi^*_k(i) > 0$ for $i = 0, \dots, k$ and $\sum_{i = 0}^k \pi^*_k(i) = 1$.\\
{\rm (b)}~The row vector $\pi^* := \pi^*_r$ is the stationary distribution of the ergodic $r$-spoke star chain with transition matrix $P^*$ satisfying, for $i = 1, \dots, r$,
\begin{align*}
P^*(i, 0) &=1-\gamma_i\quad\text{and}\quad P^*(i,i)=\gamma_i.\\
P^*(0, i) &= \frac{(1-\gamma_i) \pi^*(i)}{\pi^*(0)}\quad \text{and} \quad \quad P^*(0,0)=1-\frac{1}{\pi^*(0)}\sum_{i=1}^r (1-\gamma_i) 
\pi^*(i).
\end{align*}
\enl 
\begin{proof}
\ \smallskip \\
\indent
(a)~Fix $k \in \{0, \dots, r\}$.  Clearly $\pi^*_k(0) > 0$, so we begin by showing that $\pi^*_k(i) > 0$ for $i = 1, \dots, k$.  
Since $1 - \rho_i > 0$, we'll do this by showing that each factor in the product $\prod_{j \neq i}$ in~\eqref{p} is strictly positive.  Indeed, if $j > i$ this is clear because $0 < \rho_j < 1$.  If $j < i$, then we use
$$
\frac{1-\gamma_j - \rho_j (1-\gamma_i)}{\gamma_i - \gamma_j} = 
\frac{\rho_j(1- \gamma_i) -(1- \gamma_j)}{\gamma_j - \gamma_i} > 
\frac{\left(\frac{1-\gamma_j}{1-\gamma_i}\right) (1-\gamma_i) - (1-\gamma_j)}{\gamma_j - \gamma_i} = 0,
$$
where the inequality holds because $\lambda_j > \gamma_i$ by the interlacing condition.  To show $\sum_{i=1}^k \pi^*_k(i)=1$, we repeat the argument in the proof of Lemma 2.1 
in~\cite{MB} and include it for completeness.  Define 
\begin{equation}
\label{psidef}
\psi(s) := \prod_{i=1}^k \frac{1-\gamma_i+\rho_i s}{1-\gamma_i+s}.
\end{equation} 
Then $\psi(0)=1$, and we will show
\begin{align}
\label{psi}
\psi(s)&=\pi^*_k(0)+\sum_{i=1}^k \pi^*_k(i)\frac{1-\gamma_i}{1-\gamma_i+s}\ \text{for general s}\\
\nonumber
&=\sum_{i=0}^k \pi^*_k(i)\ \text{at }s=0,
\end{align}
which will complete the argument.  To show~\eqref{psi}, first set 
$$
f(s):=\prod_{j=1}^k(1-\gamma_j+\rho_j s),\quad
g(s):=\prod_{j=1}^k(1-\gamma_j+s),\quad
\tilde{f}(s):=f(s)-\left(\prod_{j=1}^k \rho_j\right)g(s).
$$
Note that $\tilde{f}(s)$ is a polynomial of degree $\leq k-1$ and  that 
$$\tilde{f}(-1+\gamma_i)=f(-1+\gamma_i),\quad i=1,\ldots,k.$$
Define 
$$h(s):=\sum_{i=1}^k \left(\pi^*_k(i)(1-\gamma_i)\prod_{j\neq i:1\leq j\leq k}(1-\gamma_j+s)\right).$$
A brief calculation yields 
$$\pi^*_k(i)(1-\gamma_i)=\frac{f(\gamma_i-1)}{g'(\gamma_i-1)},$$
and we see  that 
$$h(\gamma_i-1)=f(\gamma_i-1)=\tilde{f}(\gamma_i-1),\quad i=1,\ldots,k.$$
But $h(s)$, like $\tilde{f}(s)$, is a polynomial of degree $\leq k-1$, and so $h(s)=\tilde{f}(s)$ for all  $s$.  Finally, we see
\begin{align*}
\psi(s)&=\frac{f(s)}{g(s)}
   =\frac{1}{g(s)}\left[\left(\prod_{i=1}^k \rho_i \right)g(s)+\tilde{f}(s)\right]
   =\prod_{i=1}^k \rho_i +\frac{h(s)}{g(s)} \\
&=\pi^*_k(0)+\sum_{i=1}^k \pi^*_k(i) \frac{1-\gamma_i}{1-\gamma_i+s},
\end{align*}
establishing~\eqref{psi} and completing the proof of part~(a).
\smallskip

(b) Clearly, $P^*\vec{1}^T=\vec{1}^T$.  To show that $P^*$ is stochastic, we need only show that $P^*\geq 0$ entrywise.  This is clear except perhaps for the entry $P^*(0,0)$.  To see $P^*(0,0) > 0$, we first note that 
$P^*(0,0)=\mathrm{tr}\,P^*-\mathrm{tr}\,P^*_{0}$; Lemma 2.6 in~\cite{MB} then 
gives $\mathrm{tr}\,P^*-\mathrm{tr}\,P^*_{0} = \sum_{i=0}^r \lambda_i-\sum_{i=1}^r \gamma_i = \sum_{i = 0}^{r - 1} (\lambda_i - \gamma_{i + 1}) + \lambda_r > 0$.
Part~(a) establishes that $\pi^* = \pi^*_r$ is a distribution, and one sees immediately that $\pi^*$ satisfies the detailed balance equations for the transition matrix $P^*$.
\end{proof}

\begin{remark}
It would seem natural to define a $k$-spokes star chain with transition matrix ${P^*}^{(k)}$ and stationary 
distribution $\pi^*_k$ for general~$k$ just as is done for $k = r$ in \refL{L:p}.  However, it is then not clear whether 
${P^*}^{(k)}(0,0) \geq 0$.  Moreover, in our construction we use only the $P^*$ of \refL{L:p}(b) (with $k = r$).
\end{remark} 

Define $P^*_{\text{abs}}$ to be the chain $(X_t^*)_{t=0,1,\ldots}$  modified so that~$0$ is an absorbing state and note that 
$$\sigma(P^*_{\rm abs})=\lbrace1,\gamma_1,\ldots,\gamma_r\rbrace.$$  
We now begin to head towards \refT{T:ll}, which will show that $\mathcal{L}_{\pi^*}(T_0^*)=\mathcal{L}( \sum_{i=1}^r Y_i)$ for the $Y_i$'s described 
in \refT{T:Brown1d}.  To do this, we will construct a link $\Lambda_2$ between the absorbing star chain and a dual chain $(\widehat{X}_t)_{t=0,1,\ldots}$ for which the hitting time for state~$0$ 
is explicitly given as an independent sum of the modified Geometric random variables $Y_i$.

\begr
\label{R:star}
If the given chain \emph{is} already a star chain, then the star chain of \refL{L:p} is simply obtained by collapsing all leaves with the same one-step transition probability to state~$0$ into a single leaf.  This is established as Proposition~\ref{prop:star} in the Appendix, where it is also shown that the stationary probabilities collapse accordingly.  For example, suppose the given chain is the star chain with transition matrix 
\[ P = \left( \begin{array}{cccccc}
4/9 & 1/9 & 1/9 & 1/9 & 1/9 & 1/9 \\
1/6 & 5/6 & 0 & 0 & 0 & 0 \\
1/6 & 0 & 5/6 & 0 & 0 & 0 \\
2/9 & 0 & 0 & 7/9 & 0 & 0 \\
1/3 & 0 & 0 & 0 & 2/3 & 0 \\
1/3 & 0 & 0 & 0 & 0 & 2/3 \end{array} \right).\]
We see that
$\pi = \frac{1}{21}(6, 4, 4, 3, 2, 2)$ 
and that 
$$
\sigma(P) = \{1,\,5/6,\,0.8023,\,0.7303,\,2/3,\,0.1896\}, \ \  \sigma(P_0) = \{5/6,\,5/6,\,7/9,\,2/3,\,2/3\}.  
$$
The reduced set of eigenvalues of $P$ is $\{1,\,0.8023,\,0.7303,\,0.1896\}$ and the reduced set of eigenvalues of $P_0$ is 
$\{5/6,\,7/9,\,2/3\}$.  The star chain constructed in \refL{L:p} has three spokes with 
probabilities $1/6,\,2/9,\,1/3$ of moving to the hub in one step and respective stationary probabilities 
$8 / 21,\,3 / 21,\,4 / 21$ (with stationary probability $6 / 21$ at the hub). 
\enr 

The key to our construction will be the following ``spoke-breaking'' 
theorem.
\begt
\label{T:heart}
For each $i=1,\ldots, r$, the distribution $\pi^*_i \in \mathbb{R}^{r+1}$ can be represented as the mixture
\begin{equation}
\label{mix}
\pi^*_i = \rho_i \pi^*_{i - 1} + (1 - \rho_i) \nu_i
\end{equation}
of $\pi^*_{i - 1}$ and a probability distribution $\nu_i$ {\rm (}regarded as a row vector in $\mathbb{R}^{r+1}${\rm )} satisfying
\begin{equation}
\label{link}
\nu_i P^* = \gamma_i \nu_i + (1-\gamma_i) \pi^*_{i - 1}.
\end{equation}
\ent
\begin{proof}
Fix $i$.  Clearly there is a unique row vector $\nu \equiv \nu_i$ satisfying~\eqref{mix}, and it sums to unity because $\pi^*_i$ and 
$\pi^*_{i - 1}$ each do.  We will solve for~$\nu$ and see immediately that $\nu$ has nonnegative entries; indeed, we will show that~$\nu$ is given by
\begin{equation}
\label{nu}
\nu(j) =
\begin{cases}
\frac{1-\gamma_i}{1-\gamma_i - \rho_i (1-\gamma_j)} \pi^*_i(j) & \mathrm{if\ }1\leq j\leq i \\
0 & \mathrm{if\ }j = 0 \ \mathrm{or }\ j>i.
\end{cases}
\end{equation}
It will then be necessary only to prove that~$\nu$ satisfies~\eqref{link}.

We begin by establishing~\eqref{nu} for $j = 1, \dots, i - 1$.  (For $t = i - 1$ and $t = i$, the notation $\prod^t_{k \neq j}$ will be shorthand for the product over values~$k$ satisfying both 
$1 \leq k \leq t$ and $k \neq j$.)  In that case,
\begin{eqnarray*}
(1 - \rho_i) \nu(j)
&=& \pi^*_i(j) - \rho_i \pi^*_{i - 1}(j) \\
&=& (1 - \rho_j) \prod^i_{k \neq j} \frac{1-\gamma_k - \rho_k (1-\gamma_j)}{\gamma_j - \gamma_k} - 
\rho_i (1 - \rho_j) \prod^{i - 1}_{k \neq j} \frac{1-\gamma_k - \rho_k (1-\gamma_j)}{\gamma_j - \gamma_k} \\
&=& \pi^*_i(j) \left[1 - \rho_i \frac{\gamma_j - \gamma_i}{1-\gamma_i - \rho_i (1-\gamma_j)} \right] \\
&=& (1 - \rho_i) \frac{1-\gamma_i}{1-\gamma_i - \rho_i (1-\gamma_j)} \pi^*_i(j),
\end{eqnarray*}
as desired, where the first equality follows from~\eqref{mix}, and the second and third employ the 
formula~\eqref{p} both for $\pi^*_i$ and for $\pi^*_{i - 1}$.

For $j = i$ we calculate
$$
(1 - \rho_i) \nu(i) = \pi^*_i(i) - \rho_i \pi^*_{i - 1}(i) = \pi^*_i(i),
$$
\ie,
$$
\nu(i) = (1 - \rho_i)^{-1} \pi^*_i(i) = \frac{1-\gamma_i}{1-\gamma_i - \rho_i (1-\gamma_i)} \pi^*_i(i),
$$
again as desired.

For $j = 0$, \eqref{p} gives that
$$
(1 - \rho_i) \nu(0) = \pi^*_i(0) - \rho_i \pi^*_{i - 1}(0) = \prod_{k = 1}^i \rho_k - \rho_i \prod_{k = 1}^{i - 1} \rho_k = 0,
$$
once again as desired.  For $j>i$,~\eqref{nu} is clear because $\pi^*_i(j)=0=\pi^*_{i - 1}(j)$.

It remains to check that~$\nu$ satisfies~\eqref{link}.  Since both sides are vectors summing to~$1$ (on the left because $\nu$ is a probability distribution and $P^*$ is a transition kernel, and on the right because both~$\nu$ and $\pi^*_{i - 1}$ are probability distributions), we need only check $\mathrm{LHS}(j) = \mathrm{RHS}(j)$ for 
$j \neq 0$ (henceforth assumed).  We begin by calculating the state-$j$ entry of the LHS assuming $j\leq i$:
\begin{eqnarray*}
\mathrm{LHS}(j) 
&=& \sum_{k = 0}^r \nu(k) P^*(k, j) = \sum_{k = 1}^i \nu(k) P^*(k, j) \\
&=& \nu(j) P^*(j, j) = \left[ \frac{1-\gamma_i}{1-\gamma_i - \rho_i (1-\gamma_j)} \pi^*_i(j) \right] \times ( \gamma_j).
\end{eqnarray*}
On the other hand, using~\eqref{mix} we calculate
\begin{align*}
\mathrm{RHS} &= \pi^*_{i - 1} + \gamma_i \left( \nu - \pi^*_{i - 1} \right)\\
 &= \pi^*_{i - 1} + \gamma_i \left[ \nu - \rho^{-1}_i \left( \pi^*_i - (1 - \rho_i) \nu \right) \right] \\
 &= \rho^{-1}_i \left( \pi^*_i  - (1 - \rho_i) \nu \right)+ \frac{\gamma_i}{\rho_i} \left( \nu - \pi^*_i  \right).
 \end{align*}
 Therefore, for $j\leq i$ the $j$th entry of the RHS is \begin{align*}
 \mathrm{RHS}(j)&=\rho_i^{-1}\pi^*_i(j)\left[ 1-\frac{(1-\rho_i)(1-\gamma_i)}{1-\gamma_i - \rho_i (1-\gamma_j)}+\frac{\gamma_i\rho_i(1-\gamma_j)}{1-\gamma_i - \rho_i (1-\gamma_j)} \right]\\
 &=\rho_i^{-1}\pi^*_i(j)\left[ \frac{(1-\gamma_i)\rho_i\gamma_j}{1-\gamma_i - \rho_i (1-\gamma_j)} \right]\\
 &= \mathrm{LHS}(j).
\end{align*}
If $j>i$, then LHS$(j)=0=\mathrm{RHS}(j)$, finishing the proof that $\nu$ satisfies~\eqref{link}.
\end{proof}

The preceding \refT{T:heart} suggests the form for the chain $(\widehat{X}_t)_{t=0,1,2,\ldots}$ on $\lbrace 0,1,\ldots,r\rbrace$, where the times spent in state $j=0,1,2,\ldots,r$ in this chain are independent and distributed as the $Y_j$'s in 
\refT{T:Brown1d}.  Before proceeding to the construction in \refL{L:hat}, the next lemma provides some preliminaries.
\begl
 Let 
 $0< k\leq r$.  Let $\pih_k(j) := \rho_k\rho_{k-1}\ldots\rho_{j+1}(1-\rho_j)$ for all $1\leq j < k$, and let $\pih_k(k) := 1-\rho_k$.  Then 
 $\pih_k(j) \geq 0$ for $1 \leq j \leq k$, and $\sum_{j=1}^k \pih_k(j) =1-\prod_{i=1}^k \rho_i$. If we define $\pih_k(0) := \prod_{i=1}^k \rho_i$, then $\pih_k$ gives a probability distribution on $0,1,\ldots,k$.  
 \enl
The proof of this lemma is very easy.  Let us also adopt the convention $\pih_0 := \delta_0$.

 \begin{remark}
 \label{R:parallel}
Paralleling~\eqref{mix} in \refT{T:heart}, we have
$$\pih_k = \rho_k \pih_{k - 1} + (1-\rho_k)\delta_k\quad \mathrm{for}\  1\leq k\leq r.$$
\end{remark}

We are now ready to construct $(\widehat{X}_t)$:

\begl
\label{L:hat}
Let $(\widehat{X}_t)$ be the absorbing Markov chain with state space $\lbrace 0,\ldots,r\rbrace$ begun in distribution 
$\pih := \pih_r$, with transition matrix $\widehat{P}$ defined by
$$
\widehat{P}(i,j)= 
\left\{
\begin{array}{ll}
1 &\mbox{\rm if\ }0=j=i\\
\gamma_i &\mbox{\rm if\ }0<j=i\\
(1-\gamma_i)\cdot\pih_{i - 1}(j) &\mbox{\rm if\ }j<i\\
0 &\mbox{\rm if\ }j>i.
\end{array}
\right.
$$
Then 
\begin{enumerate}
\item[{\rm (a)}] If $Z_i$ is the time spent in state i {\rm (}including time 0{\rm )} by $(\widehat{X}_t)$ with initial distribution $\pih$ prior to hitting $0$, then $\mathcal{L}(Z_1,Z_2,\ldots,Z_r)=\mathcal{L}(Y_1,Y_2,\ldots,Y_r)$.
\item[{\rm (b)}] If\ $\widehat{T}_0$ is the hitting time of state 0 for the chain $(\widehat{X}_t)$ with initial distribution $\pih$,  then 
$\widehat{T}_0\Leq\sum_{i=1}^r Y_i.$
\end{enumerate}
\enl   
\begin{proof}
(a)~When 
viewed in the right light, the lemma is evident.  The chain moves downward through the state space $\lbrace0,1,\ldots,r\rbrace$, with ultimate absorption in state~$0$, and can be constructed by performing a sequence of $r$ independent Bernoulli trials $ W_r,\ldots,W_1$ with varying success probabilities $1-\rho_r,\ldots,1-\rho_1$, respectively.  If $W_i=0$, then the chain does not visit state $i$, whereas if $W_i=1$ then the amount of time spent in state $i$ is Geom$(1-\gamma_i)$ independent of the amounts of time spent in the other states.

A formal proof of part~(a) is not difficult but would obscure this simple construction and is therefore not included.

(b)~This is immediate from part~(a), since $\widehat{T}_0=\sum_{i=1}^r Z_i$.
\end{proof}
As the culmination of this subsection we exhibit an intertwining between $(\pi^*, P_{\mathrm{abs}}^*)$ and 
$(\pih, \widehat{P})$.   

\begt
\label{T:ll}
Let $\Lambda_2$ be defined as follows:
$$
\Lambda_2(0,:) := \delta_0, \qquad \Lambda_2(i,:):=\nu_i \mathrm{\ for\ }i=1,\ldots,r.
$$ 
Then
$(\pi^*, P_{\mathrm{abs}}^*)$ and $(\pih, \widehat{P})$ are intertwined by the link~$\gL_2$, which satisfies~\eqref{abs}; to wit,
\begin{align}
\label{a}
\Lambda_2 P^*_{\mathrm{abs}} &= \widehat{P}\Lambda_2,\\
\label{b}
\pi^* &= \pih \Lambda_2,\\
\label{c}
\gL_2 \delta_0^T &= \delta_0^T.
\end{align}
\ent
\begin{proof}
We begin by noting that~$\gL_2$ is stochastic because, as noted in \refT{T:heart}, each $\nu_i$ is a probability distribution.  

From \refT{T:heart} we have that $\pi^*_k = \rho_k \pi^*_{k - 1} + (1 - \rho_k) \nu_k$ for $1\leq k\leq r$,  and from \refR{R:parallel} we have the corresponding equations for $\pih_k$,  namely, $\pih_k = \rho_k \pih_{k - 1} + (1 - \rho_k) \delta_k$  for all $1\leq k\leq r$.  One can use these results to prove $\pi^*_k = \pih_k \Lambda_2$ for $k=0,1,\ldots,r$ by induction on $k$; in particular, \eqref{b} follows by setting $k = r$.  

To show~\eqref{a}, first observe $(\Lambda_2 P^*_{\mathrm{abs}}) (0,:)=\delta_0=(\widehat{P}\Lambda_2)(0,:)$.  Comparing $i$th rows for $1\leq i\leq r$, we see 
\begin{equation}
\label{Lambda2Pabs*}
(\Lambda_2 P^*_{\mathrm{abs}})(i,:)=\nu_i P^*_{\mathrm{abs}}=\gamma_i \nu_i + (1-\gamma_i) \pi^*_{i - 1} 
\end{equation} 
by ~\eqref{link} and the fact that $\nu_i(0)=0$ for all $i$.  Iterating ~\refT{T:heart}, we see for $i=1,\ldots,r$ that
\begin{align*}
\pi^*_i &=(1-\rho_i)\nu_i + \rho_i \pi^*_{i - 1}\\
&=(1-\rho_i)\nu_i + \rho_i\left[(1-\rho_{i-1})\nu_{i - 1} + \rho_{i-1} \pi^*_{i - 2}\right]\\
&=(1-\rho_i)\nu_i + \rho_i(1-\rho_{i-1})\nu_{i - 1} + \rho_i\rho_{i-1} \pi^*_{i - 2}\\
&=\cdots=\pih_i(i)\nu_i + \pih_i(i-1)\nu_{i - 1} + \cdots+\pih_i(1)\nu_1+\pih_i(0)\delta_0.
\end{align*}
So $\pi^*_i = \sum_{j=1}^i \pih_i(j)\nu_j + \pih_i(0)\delta_0$ for $i = 1, \dots, r$, and the same equation holds for $i = 0$ because $\pi^*_0 = \delta_0 = \pih_0$.  Applying this to equation~\eqref{Lambda2Pabs*} we find for $i = 1, \dots, r$ that
\begin{align*}
(\Lambda_2 P^*_{\mathrm{abs}})(i,:)&=\gamma_i \nu_i + \sum_{j=1}^{i-1}(1-\gamma_i)\pih_{i - 1}(j)\nu_j 
+ (1-\gamma_i) \pih_{i - 1}(0) \delta_0\\
&=(\widehat{P}\Lambda_2)(i,:),
\end{align*}
as desired, where at the last equality we have recalled $\Lambda_2(0, :) = \delta_0$.

Finally, \eqref{c} asserts that the $0$th column of $\gL_2$ is $\delta_0^T$.  This follows from the definition of $\gL_2$, since it has already been noted at~\eqref{nu} that $\nu_i(0) = 0$ for $i = 1, \dots, r$.
\end{proof}

\subsection{Quasi-link to the star chain}
\label{S:starlink}

The main result of this subsection is \refT{T:link2}, which provides a quasi-link between the \emph{absorbing} transition matrices 
$P_{\text{\rm abs}}$ and $P^*_{\text{\rm abs}}$ corresponding to the given chain and the star chain, respectively.  We begin with a linear-algebraic lemma. 

\begl
The matrix $P_{\mathrm{abs}}$ has $n + 1$ linearly independent left eigenvectors.  Its multiset of $n + 1$ eigenvalues is $\lbrace 1, \eta_1, \dots, \eta_n\rbrace$.
\enl
\begin{proof}
Recall that $\sigma(P_0)=\lbrace\eta_1, \dots, \eta_n\rbrace$.  Recall also that $D_0 = \mathrm{diag}(\pi_1, \ldots, \pi_n)$ and that $S_0 = D_0^{1/2} P_0 D_0^{-1/2}$ is a symmetric matrix.  Let $\widetilde{U}$ be an $n$-by-$n$ orthogonal matrix whose rows are orthonormal left eigenvectors of $S_0$, so that $\widetilde{U} S_0 \widetilde{U}^T=\text{diag}(\eta_1,\eta_2,\ldots,\eta_n)$.  Then the rows (denoted $u_1, \dots, u_n$) of the $n$-by-$n$ matrix $U:=\widetilde{U} D_0^{1/2}$ are left eigenvectors of $P_0$ with respective eigenvalues $\eta_1,\ldots,\eta_n$.  For $i=1,\ldots,n$, define the scalar
$$w_i:=\frac{(0|u_i) P(:,0)}{\eta_i-1};$$
then $(w_i|u_i)P_{\text{abs}}=\eta_i(w_i|u_i)$ and $\eta_i\in\sigma(P_{\text{abs}})$.  
Finally, $\delta_0 P_{\text{abs}}=\delta_0$.  The $n+1$ eigenvectors $\delta_0$ and $(w_i|u_i)$ for $i=1,\ldots,n$ are clearly linearly independent, and our proof is complete.  
\end{proof}

Note that 
$$(w_i|u_i) \vec{1}^T=(w_i|u_i)P_{\text{abs}} \vec{1}^T=\eta_i(w_i|u_i) \vec{1}^T$$
and $\eta_i<1$, implying that $(w_i|u_i) \vec{1}^T=0$ and $w_i=-u_i \vec{1}^T$. 

Let $n_i$ denote the algebraic (also geometric) multiplicity of the eigenvalue $\gamma_i$ as an eigenvalue of $P_0$ (here we are working with the reduced set of eigenvalues again).  Relabel the eigenvectors corresponding to $\gamma_i$ by $u_1^i,\ldots,u_{n_i}^i$.  Note that, when viewed as an eigenvalue of $P_{\text{abs}}$, $\gamma_i$ has algebraic (also geometric) multiplicity $n_i$, with corresponding eigenvectors $(-u_1^i \vec{1}^T|u_1^i),\ldots,(-u_{n_i}^i \vec{1}^T|u_{n_i}^i)$.  In the next theorem we construct our 
$(r+1)$-{by}-$(n+1)$ quasi-link $\Lambda_1$ between  $(\pi, P_{\text{abs}})$ and $(\pi^*, P^*_{\text{abs}})$.
\begt
\label{T:link2}
There exists a quasi-link $\Lambda_1$ providing a quasi-intertwining between $(\pi, P_{\text{\rm abs}})$ and 
$(\pi^*, P^*_{\text{\rm abs}})$ and satisfying~\eqref{abs}, \ie, a matrix~$\gL_1$ with rows summing to~$1$ such 
that 
\begin{align}
\label{link21}
\pi &= \pi^* \Lambda_1,\\
\label{link22}
\Lambda_1 P_{\text{\rm abs}}&=P^*_{\text{\rm abs}}\Lambda_1,\\
\label{link23}
\gL_1 \delta_0^T &= \delta_0^T.
\end{align} 
\ent
%
\begin{proof}
If row $i$ of $\Lambda_1$ is denoted by $x_i$ for $i=0,...,r$, then for~\eqref{link22} we require
$$x_0 P_{\text{abs}}=x_0; \hspace{5mm} x_i P_{\text{abs}}=(1-\gamma_i)x_0+\gamma_i x_i\hspace{5mm}i=1,\ldots,r.$$
This forces $x_0 = \delta_0$ and $$x_i(P_{\mathrm{abs}}-\gamma_i I)=(1-\gamma_i)\delta_0, \quad i=1,\ldots,r.$$  Therefore, for 
$\Lambda_1 P_{\text{abs}}=P^*_{\text{abs}}\Lambda_1$ to hold, we necessarily set
$$x_i=\delta_0+\sum_{j=1}^{n_i} c_j^i\ (-u_j^i \vec{1}^T|u_j^i),\quad i=1\ldots,r$$ where the $c_j^i$'s are soon-to-be-determined real constants.  

For \emph{any} choices of $c_j^i$'s above we have that the rows of~$\gL_1$ sum to unity and 
$\Lambda_1 P_{\text{abs}}=P^*_{\text{abs}}\Lambda_1$, but it remains to be shown that we can define $c_j^i$'s so 
that~\eqref{link21} holds.  The difficulty is that  there may exist values 
$\eta_i\in\sigma(P_0)$ such that $\eta_i\neq\gamma_j$ for any $j=1,\ldots,r$.  However, we will show in the next lemma that~$\pi$ is in the span of the eigenvectors corresponding to the remaining eigenvalues, and that will complete our proof of~\eqref{link21}.

To prove~\eqref{link23}, we use \eqref{link21}--\eqref{link22} to get 
$\pi P_{\text{abs}}^t = \pi^*\Lambda_1 P_{\text{abs}}^t= \pi^* P^{*t}_{\text{abs}}\Lambda_1$; we find [using $\Lambda_1(0, 0) = 1$] that the $0$th entry of this vector is 
\begin{align*}
\PP_{\pi}(T_0\leq t)&=\sum_i \pi^*(i) \sum_j P_{\mathrm{abs}}^{*t}(i,j) \Lambda_1(j,0)\\
&=\pi^*(0)+\sum_{i\neq 0} \pi^*(i)[P_{\mathrm{abs}}^{*t}(i,0)+P_{\mathrm{abs}}^{*t}(i,i)\Lambda_1(i,0)]\\
&=\pi^*(0)+\sum_{i\neq 0} \pi^*(i)[1+P_{\mathrm{abs}}^{*t}(i,i)(\Lambda_1(i,0)-1)]\\
&=\pi^*(0)+\sum_{i\neq 0} \pi^*(i)[1+\gamma_i^t(\Lambda_1(i,0)-1)]\\
&=1+\sum_{i = 1}^r \pi^*(i) \gamma_i^t (\Lambda_1(i,0)-1).
\end{align*} 
We also have from~\eqref{hit1} in the proof of the next lemma that $\PP_{\pi}(T_0\leq t)=1-\sum_{i=1}^r \pi^*(i) \gamma_i^t$.  Therefore $\Lambda_1(i,0)=0$ for $i>0$, and \eqref{link23} follows.
\end{proof}

\begl
There exist 
real constants $c_j^i$ such that $\pi=\pi^* \Lambda_1$.
\enl 
\begin{proof}
We will make use of the fact that 
\begin{equation}
\label{hit1}
\PP_{\pi}(T_0>t)=\sum_{j=1}^r \pi^*(j) \gamma_j^t, \quad t = 0, 1, \dots,
\end{equation} 
which follows from its continuous-time analogue, 
equation~(1.1) in~\cite{MB},
using \refL{L:continuous-discrete}.
[That analogue is established using the fact that the function~$\psi$ in our equations \eqref{psidef}--\eqref{psi} is the Laplace transform of $T_0$ for the stationary continuized chain; see~\cite{MB} for further details.] 
Define 
$$
\pit := (\pi(1),\ldots,\pi(n))\in\mathbb{R}^n;
$$
we would use the notation $\pi_0$ to indicate this deletion of the $0$th entry from~$\pi$ except that it conflicts with our notation for the initial distribution of the given chain.
We then have that $P_{\pi}(T_0>t)= \pit P_0^t \vec{1}^T$.  Using the spectral representation of $P_0$ we find for $t \geq 0$ that
\begin{equation}
\label{hit2}
\PP_{\pi}(T_0>t)=\sum_{i=1}^n \sum_{j=1}^n\sum_{k=1}^n \sqrt{\pi(i)\pi(j)}\widetilde{U}(k,i)\widetilde{U}(k,j) \eta_k^t 
= \sum_{k = 1}^n q_k \eta_k^t.
\end{equation}
Here $\qq = (q_1, \dots, q_n) =(\pit^{1/2} \widetilde{U}^T)^2$, where both the nonnegative square root and the square are in the Hadamard sense.  In particular, $q_k \geq 0$ for all $k=1,\ldots,n$.  Comparing \eqref{hit1} and \eqref{hit2}, it is clear that if $\eta_i\neq\gamma_j$ for every $j=1,\ldots,r$, then $q_i=0$.  
Again comparing \eqref{hit1} and \eqref{hit2}, for each $\gamma_j$ there is an $\eta_k=\gamma_j$ such that the coefficient of $\eta_k^t$ in \eqref{hit2},  namely $q_k$, is strictly positive.    
Now $\qq=(\pit^{1/2}\widetilde{U}^T)^2$ equals the Hadamard square $(\pit D_0^{-1/2} \widetilde{U}^T)^2$.  
We can therefore choose $R$, a diagonal matrix with $\pm 1$ along the diagonal, such that $\pit = \qq^{1/2} R (\tilde{U} D_0^{1/2})=\qq^{1/2}R U$; here $\qq^{1/2}$ is the Hadamard nonnegative square root of~$\qq$.  Relabel the entries of the vector~$\qq$ (and of~$R$) so that
$$\pit=\sum_{i=1}^r \sum_{j=1}^{n_i} r_j^i (q^i_j)^{1/2} u_j^i.$$
Letting $c_j^i=r_j^i (q_j^i)^{1/2}/\pi^*(i)$ yields
$$\pit=\sum_{i=1}^r \sum_{j=1}^{n_i} \pi^*(i)\,c_j^i\,u_j^i.$$
It remains only to show that for this choice of $c_j^i$'s we have 
$$
\pi(0)=1+\sum_{i=1}^r \sum_{j=1}^{n_i} \pi^*(i) c_j^i (-u^i_j \vec{1}^T).
$$
This is immediate 
from 
$$
1-\pi(0)=\pit \vec{1}^T=\qq^{1/2} R U\vec{1}^T=\sum_{i=1}^r \sum_{j=1}^{n_i} r_j^i(q^i_j)^{1/2}(u_j^i\ \vec{1}^T).
\vspace{-.4in}
$$ 
\end{proof}
\vspace{3mm}
Our 
construction of~$\Lambda_1$ uses the eigenvectors of $P_{\text{abs}}$; the entries of these eigenvectors are not all nonnegative, and as a result neither (in general) are the entries of~$\Lambda_1$.  In the special case that the given chain is a star chain, the quasi-link $\Lambda_1$ is a \emph{bona fide} link.  For example, for the chain considered in \refR{R:star} the quasi-link $\Lambda_1$ is easily seen to be the link
\[ \Lambda_1 = \left( \begin{array}{cccccc}
1 & 0 & 0 & 0 & 0 & 0 \\
0 & 1/2 & 1/2 & 0 & 0 & 0 \\
0 & 0 & 0 & 1 & 0 & 0 \\
0 & 0 & 0 & 0 & 1/2 & 1/2 \end{array} \right).\]

\begr 
\label{R:unique} 
If $r=n$ (\ie, the reduced spectra are the same as the unreduced spectra), then it is not hard to show that the quasi-link 
$\gL_1$ of \refT{T:link2}
is uniquely determined.
\enr  
\subsection{The big link~$\Lambda$}
\label{S:biglink}
Combining the quasi-link $\Lambda_1$ of \refT{T:link2} between $(\pi, P_{\mathrm{abs}})$ and $(\pi^*, P^*_{\mathrm{abs}})$ and the link $\Lambda_2$ of \refT{T:ll} between $(\pi^*, P^*_{\mathrm{abs}})$ and $(\pih, \widehat{P})$, we obtain the desired quasi-link $\Lambda = \Lambda_2 \Lambda_1$ between $(\pi, P_{\mathrm{abs}})$ and $(\pih, \widehat{P})$.  

\begt
\label{T:biglink}
Let $\Lambda:=\Lambda_2 \Lambda_1$.  Then~$\gL$ is a quasi-link providing a quasi-inter\-twining of $(\pi, P_{\mathrm{abs}})$ and 
$(\pih, \Ph)$, and therefore $ \Lc_{\pi} T_0 = \Lc_{\pih} \Th_0$.
\ent
\begin{proof}
This follows from \refR{R:prod} and the discussion in \refS{S:quasi}.
\end{proof}

If $\gL$ is stochastic, then we have a link between $P_{\rm abs}$ and $\widehat{P}$ and we can use the discussion following Definition~\ref{D:twine} to construct a sample path of $(\widehat{X}_t)$ given a realization of $(X_t)$.  However, it's easy to find examples showing that~$\gL$ is not nonnegative in general.  

The discussion
preceding \refR{R:unique} shows that $\gL$ is a link if the given chain~$X$ is a star chain.  More generally, $\gL$ is a link if the given chain is a ``block star chain'', defined as follows:  Choose positive numbers $b_0, \dots, b_k$ summing to unity and $0 < \pi_0 \leq 1$.  For $i = 1, \dots, k$, let 
$c_i := \pi_0 b_i$ and let $Q_i$ be an ergodic and reversible Markov kernel with stationary probability mass function $\pi_i$.  Let~$P$ be the following special case 
of~\eqref{block}:
\[
P = 
\left( 
\begin{array}{cccccc}
b_0 & b_1 \pi_1 & b_2 \pi_2 & \hdots & b_k \pi_k \\
c_1 \vec{1}^T & (1 - c_1) Q_1 & 0  &\hdots & 0 \\
c_2 \vec{1}^T & 0 & (1 - c_2) Q_2  & \hdots & 0 \\
\vdots & \vdots &\vdots  & \ddots  & \vdots  \\
c_k \vec{1}^T & 0&0 & \hdots & (1 - c_k) Q_k
\end{array}
\right);
\] 
it is easily checked that~$P$ is ergodic and reversible with stationary distribution equal to the concatenated row vector $(\pi_0 + k)^{-1} (\pi_0 | \pi_1 | \cdots | \pi_k)$, and that the reduction of spectra described in \refS{S:Brown1} results in 
$\{\gamma_1, \dots, \gamma_r\}$ being some subset of distinct elements from $\{1 - c_1, \dots, 1- c_k\}$. 
If, for example,  $r = k$, then $\gL_1$ is the matrix~\eqref{blocklink}, where $\mu_0 = (1)$ is 1-by-1 and we recall for 
$1 \leq j \leq k$ that $\mu_j\ (=\pi_j)$ is the quasi-stationary distribution for the $j$th diagonal block $(1-c_j)Q_j$ of~$P$;
hence $\gL_1$ is a link (and so, then, is $\gL = \gL_1 \gL_2$).  We are not aware of other interesting cases 
where~$\gL$ is guaranteed to be a link, but the key is to arrange, as for block star chains, for $P_0$ to have nonnegative eigenvectors corresponding to eigenvalues $\gamma_1 \dots, \gamma_r$.

\begr
Is there a \emph{unique} quasi-link~$\gL$ which, like the one constructed in \refT{T:biglink}, satisfies $\gL \gd_0^T = \gd_0^T$ and provides a quasi-intertwining of $(\pi, P_{\rm abs})$ and $(\pih, \Ph)$?  We do not know the answer in general, but if $r=n$, then the answer is affirmative by \refR{R:unique} and the invertibility of $\Lambda_2$.
\enr 

\section{Another representation for 
hitting times from stationarity}
\label{S:V}
Our final application of the strategy outlined in \refS{S:strategy} will provide a stochastic construction for an alternative characterization of the hitting-time distribution from stationarity first proved by Mark Brown [personal communication] in an unpublished technical report.  A published version of a special case can be found in~\cite{MBHT2}.  Our construction here is notable in that it will provide a generalization (to not necessarily reversible chains) of the discrete-time analogue of Brown's original result, and it is by applying our strategy that we discovered the generalization.

Brown's original theorem is the following, in which~$0$ is an arbitrary fixed state.
\begt[Mark Brown] \label{T:MBc}
Consider an ergodic time-reversible finite-state con\-tin\-u\-ous-time Markov chain with stationary distribution~$\pi$.  Let~$V$ be a random variable with
$$
\PP(V > t) = \frac{P_{0 0}(t) - \pi(0)}{1 - \pi(0)}, \quad 0 \leq t < \infty.
$$
Let $V_1, V_2, \dots$ be iid copies of~$V$, and let~$N$ be independent of the sequence 
$(V_i)$ with $N+1$ distributed Geometric with success probability~$\pi(0)$:
$$
\PP(N = k) = \pi(0) [1 - \pi(0)]^k, \quad k = 0, 1, \dots.
$$
Then the distribution $\Lc_{\pi} T_0$ of the nonnegative hitting time $T_0$ of~$0$ from a stationary start is the distribution of 
$\sum_{i = 1}^N V_i$.
\ent

We will focus on the following discrete-time analogue.  As in \refS{S:lace}, analogues of all of our results can be established in the continuous-time setting as well, but we have chosen discrete time for convenience and ease of understanding.

\begt \label{T:MBd}
Consider an ergodic time-reversible finite-state discrete-time Markov chain with stationary distribution~$\pi$.
Assume that $P^t(0, 0)$ is nonincreasing  in~$t$.  Let~$V$ be a random variable with
$$
\PP(V > t) = \frac{P^t(0, 0) - \pi(0)}{1 - \pi(0)}, \quad t = 0, 1, \dots.
$$
Let $V_1, V_2, \dots$ be iid copies of~$V$, and let~$N$ be independent of the sequence 
$(V_i)$ with $N + 1$ distributed Geometric with success probability~$\pi(0)$:
$$
\PP(N = k) = \pi(0) [1 - \pi(0)]^k, \quad k = 0, 1, \dots.
$$
Then the distribution $\Lc_{\pi} T_0$ of the nonnegative hitting time $T_0$ of~$0$ from a stationary start is the distribution of 
$\sum_{i = 1}^N V_i$.
\ent

The assumption in \refT{T:MBd} that $P^t(0, 0)$ is nonincreasing in~$t$ is met, for example,
if the chain is time-reversible and all the eigenvalues of the one-step transition matrix~$P$
are nonnegative.  {\bf However, we do not need to assume reversibility to follow our approach}, so \refT{T:MBd} (and likewise \refT{T:MBc}) is true without that assumption.  For a non-reversible scenario in which the nonincreasingness assumption is satisfied, see \refR{R:sep} and the paragraph preceding~it.

Following our strategy, we aim to provide a sample-path intertwining of the given chain~$X$ in \refT{T:MBd} with a chain $\Xh$  (with, say, initial distribution~$\pih_0$ and transition matrix~$\Ph$) for which the hitting time~$\Th_0$ has (for each sample path) a clear decomposition $\sum_{i = 1}^N V_i$ as in the theorem.   
As in our earlier application, we can treat~$0$ as an absorbing state for the given chain, whose one-step transition matrix we then denote by $P_{\mathrm{abs}}$.  We thus wish to find $(\pih_0, \Ph)$ and a link (or at least quasi-link) $\Lambda$ such that
$\pi = \pih_0  \Lambda$ and $\Lambda P_{\mathrm{abs}} = \widehat{P} \Lambda$.
The chain~$\Xh$ we will construct has state 
space $\{0, 1, \dots \}$.  Although the state space is infinite, this gives no difficulties as the needed intertwining results from~\cite{DFSST} apply just as readily to Markov chains with countably infinite state spaces.
First we construct our~$\Lambda$.

Suppose the given chain has
state space $\{0, 1, \dots, n\}$.  We adopt notation that highlights the special role of state~$0$.  
Let $\pi = (\pi(0) |\pi_{-0}) \in \mathbb{R}^{n + 1}$ with $\pit \in \mathbb{R}^n$, and similarly 
let $P^{i-1}(0,:)=(P^{i-1}(0,0)\ |P^{i-1}(0,:)_{-0})\in\mathbb{R}^{n + 1}$.
For
$i = 1 ,2, 3, \ldots$, define
$$
\mu_i := \left(0 \left| \frac{P^{i-1}(0,0)\pi_{-0}-\pi(0)P^{i-1}(0,:)_{-0}}{P^{i-1}(0,0)-\pi(0)} \right. \right) \in \mathbb{R}^{n + 1}.
$$

\begl
\label{L:mu}
With $\mu_i$ defined above, we have for $i>0$ that
$$
\mu_i P=q_i \pi +(1-q_i) \mu_{i+1},
$$
where
$$q_i:=\frac{P^{i-1}(0,0)-P^{i}(0,0)}{P^{i-1}(0,0)-\pi(0)} \in [0,1).$$
\enl
\begin{proof}
First note
$$\mu_i P=\frac{P^{i-1}(0,0)(0|\pi_{-0})P-\pi(0) (0|P^{i-1}(0,:)_{-0})P}{P^{i-1}(0,0)-\pi(0)}.$$  
Now 
$(0|\pi_{-0})P = \pi-\pi(0)P(0,:)$, and similarly  
\begin{align*}
(0|P^{i-1}(0,:)_{-0})P 
&= P^{i-1}(0,:)P-P^{i-1}(0,0)P(0,:)\\
&= P^i(0,:) - P^{i-1}(0,0)P(0,:);
\end{align*}
hence
\begin{align*}
\lefteqn{\hspace{-.2in}P^{i-1}(0,0)(0| \pi_{-0})P - \pi(0) (0| P^{i-1}(0,:)_{-0})P} \\
&= P^{i-1}(0,0)\pi-\pi(0)P^{i}(0,:)\\
&=P^{i-1}(0,0)\pi-(P^i(0,0)\pi(0)|0)-(0|\pi(0)P^{i}(0,:)_{-0})\\
&=P^{i-1}(0,0)\pi-P^{i}(0,0)\pi+P^{i}(0,0)(0|\pi_{-0})-(0|\pi(0)P^{i}(0,:)_{-0})\\
&=[P^{i-1}(0,0)-P^{i}(0,0)]\pi+(0|P^i(0,0)\pi_{-0} - \pi(0)P^{i}(0,:)_{-0}).
\end{align*}
Letting 
$$q_i:=\frac{P^{i-1}(0,0)-P^{i}(0,0)}{P^{i-1}(0,0)-\pi(0)},$$ it follows that
$\mu_i P 
= q_i \pi 
+ (1-q_i)\mu_{i+1},
$
as desired.
\end{proof}
This lemma suggests the form for $\widehat{P}$ and $\Lambda$.  Let $\widehat{X}$ have state space $\lbrace 0,1,2,\ldots\rbrace$. Define the transition kernel~$\widehat{P}$ by setting $\widehat{P}(0,0) := 1$ and, for $i > 0$,
$$
\widehat{P}(i,0) :=\pi(0) q_i, \qquad
\widehat{P}(i,1) :=[1-\pi(0)] q_i, \qquad
\widehat{P}(i,i+1) :=1-q_i;
$$
we set $\Ph(i, j) := 0$ for all other pairs $(i, j)$.  As the following lemma shows, the hitting time $\Th_0$ for this chain~$\widehat{X}$ has a simple decomposition as a sum of Geometrically many iid copies of~$V$. 
\begl
Let~$\Xh$ have initial distribution $\pih_0:=\pi(0)\delta_0+[1-\pi(0)]\delta_1$ and one-step transition matrix~$\Ph$.  Then there exist random variables~$N$ and $V_1, V_2, \dots$ with joint distribution as in \refT{T:MBd} such that {\rm (}for every 
sample path{\rm )}
$\Th_0 = \sum_{i = 1}^N V_i$.
\enl

\begin{proof}
Let $N \geq 0$ denote the number of visits to state~$1$; and for $i = 1, \dots, N$, let $V_i$ denote the highest state reached in the time interval $[\tau_i, \tau_{i + 1})$, where $\tau_i$ denotes the epoch of $i$th visit to state~$1$.  Then all of the assertions of the lemma are clear; it is perhaps worth noting only that for $t = 0, 1, \dots$ we have
$$
\PP(V_1 > t) = \prod_{i = 1}^t (1 - q_i) = \frac{P^t(0,0)-\pi(0)}{1-\pi(0)}.
\vspace{-.35in} 
$$
\end{proof}
\vspace{.15in}

Define~$\Lambda$ by setting $\Lambda(0,:) := \delta_0$ and $\Lambda(i,:) := \mu_i$ for $i>0$.  Note that~$\Lambda$ has infinitely many rows, each of which is in $\mathbb{R}^{n + 1}$.
We then have the following theorem whose proof is almost immediate from the definitions and \refL{L:mu}.
\begt
\label{T:quasi}
The quasi-link $\Lambda$ provides a quasi-intertwining of $(\pi, P_{\mathrm{abs}})$ and $(\pih_0, \Ph)$ and satisfies~\eqref{abs}, and therefore $\Lc_{\pi} T_0 = \Lc_{\pih_0} \Th_0$. 
\ent
\begin{proof}
It is easily checked that each row of~$\Lambda$ sums to unity.  Further, for $i > 0$ using the observations that $\mu_i(0) \equiv 0$ and $\mu_1 = \left(0 \left| \frac{\pi_{-0}}{1-\pi(0)} \right. \right)$, one finds readily that the $i$th row of 
$\Lambda P_{\mathrm{abs}}$ is $\mu_i P = q_i \pi + (1 - q_i) \mu_{i + 1}$, which is the $i$th row of $\Ph \Lambda$.  The $0$th rows are both $\delta_0$, so we conclude $\Lambda P_{\mathrm{abs}} = \Ph \Lambda$.  Similarly, $\pi = \pih_0 \Lambda$.  Finally, since 
$\mu_0(0) = 1$ and $\mu_i(0) = 0$ for $i > 0$, we have $\gL \delta_0^T = \delta_0^T$, which is~\eqref{abs}.  The equality of 
hitting-time laws then follows from the discussion in \refS{S:quasi}. 
\end{proof}
Note that~$\Lambda$ is a link (in which case sample-path linking is possible) if and only if for every $t \geq 0$ the $t$-step transition probability $\Pt^t(i, 0)$ is maximized when $i = 0$; here~$\Pt$ is the time-reversed transition matrix $\Pt(i, j) := \pi(j) P(j, i) / \pi(i)$.  A sufficient condition for this is that the state space is partially ordered, $0$ is either a top element or a bottom element, and~$\Pt$ is stochastically monotone.
\begr
\label{fastsst}
The intertwining constructed in Lemma \ref{L:mu} and Theorem \ref{T:quasi} can be related to the fastest strong stationary time construction of \cite{ADFSST} and the corresponding strong stationary dual constructed in Example~2.6 of \cite{DFSST}.  In the interest of brevity, we omit an explanation of the connection.
\enr

\begr
\label{R:sep}
We claim that if~$P$ is such that $\Pt^t(i, 0)$ is maximized for every~$t$ when $i = 0$, then~$P$ automatically satisfies the assumption in \refT{T:MBd} that $P^t(0, 0)$ is nonincreasing in~$t$.  To see this, consider the chain~$X$ with transition matrix~$P$ started in distribution $\mu_1 = [1 - \pi(0)]^{-1} [\pi - \pi(0) \delta_0]$.  Then, for any state~$i$,
$$
\frac{\PP(X_t = i)}{\pi(i)} = \frac{\pi(i) - \pi(0) P^t(0, i)}{\pi(i) [1 - \pi(0)]} = \frac{1 - \Pt^t(i, 0)}{1 - \pi(0)}.
$$
If $s(t)$ is the separation 
of the chain at time~$t$, then $1-s(t)$ equals the minimum of this ratio over~$i$, namely, $[1 - P^t(0, 0)] / [1 - \pi(0)]$.  It is well known 
(\eg,\ \cite[Chapter~9]{ALDF}) that separation is nonincreasing in~$t$, so $P^t(0, 0)$ is nonincreasing. 
\enr
\bigskip

{\bf Acknowledgment.}  We thank Mark Brown for stimulating discussions and the anonymous referee for helpful comments.

\bibliographystyle{plain}
\bibliography{mybib}

\appendix

\section{$P^*$ when~$P$ is a star chain}
In Remark~\ref{R:star} it is claimed that if
the given chain~$P$ is a star chain, then the star chain of \refL{L:p} is simply obtained by collapsing all leaves with the same one-step transition probability to state~$0$ into a single leaf.  More precisely, we establish the following:
\begin{proposition}
\label{prop:star}
Let~$P$ be the transition matrix of an ergodic star chain with hub at~$0$.  If for each $\gamma_i$ in the reduced set of eigenvalues of $P_0$ we define 
$$
m(i):=\{j\in[n]:\eta_j=\gamma_i\},
$$
then 
$P^*(0,i) = \sum_{j\in m(i)} P(0,j)$.
\end{proposition}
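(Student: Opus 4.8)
The plan is to exploit the very simple structure of a star chain. If $P$ is an ergodic star chain with hub $0$ and leaves $1,\dots,n$, then the principal submatrix $P_0$ is the \emph{diagonal} matrix $\diag(\eta_1,\dots,\eta_n)$, where $\eta_j = P(j,j) = 1 - P(j,0)$ for each leaf $j$. I would begin by computing the hitting-time tail from stationarity in two ways. Directly: started from $\pi$, the event $\{T_0>t\}$ occurs exactly when $X_0$ is some leaf $j$ and the chain holds at $j$ throughout $X_1,\dots,X_t$ (from a leaf, every move goes straight to $0$), so $\PP_{\pi}(T_0>t)=\sum_{j=1}^{n}\pi(j)\,\eta_j^{\,t}$ (which is also the general identity $\PP_{\pi}(T_0>t)=\pit P_0^t\vec{1}^T$ specialized to a diagonal $P_0$). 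Grouping the leaves by the common value of $\eta_j$, this is $\sum_{\gamma}\bigl(\sum_{j:\eta_j=\gamma}\pi(j)\bigr)\gamma^{\,t}$, a sum over the \emph{distinct} values $\gamma$ occurring among $\eta_1,\dots,\eta_n$, each with a strictly positive coefficient (ergodicity gives $\pi(j)>0$). On the other hand, \eqref{hit1} gives $\PP_{\pi}(T_0>t)=\sum_{i=1}^{r}\pi^*(i)\gamma_i^{\,t}$, again a sum over \emph{distinct} ratios $\gamma_i$ (they strictly interlace after reduction) with strictly positive coefficients $\pi^*(i)$ (positivity by \refL{L:p}(a)).

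Since geometric sequences $(\gamma^{\,t})_{t\ge0}$ with distinct $\gamma$ are linearly independent, two such reduced representations of the same function of $t$ must agree term by term: the set $\{\gamma_1,\dots,\gamma_r\}$ coincides with the set of distinct values among $\eta_1,\dots,\eta_n$ (so in particular $\bigcup_{i=1}^{r}m(i)=[n]$), and
\[
\pi^*(i)=\sum_{j\in m(i)}\pi(j),\qquad i=1,\dots,r .
\]
This already establishes the ``collapse of stationary probabilities'' mentioned in \refR{R:star}. Summing over $i$ and using $\sum_{i=0}^{r}\pi^*(i)=1$ from \refL{L:p}(a) then gives $\pi^*(0)=1-\sum_{j=1}^{n}\pi(j)=\pi(0)$.

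To finish, I would combine this with \refL{L:p}(b), which states $P^*(0,i)=(1-\gamma_i)\pi^*(i)/\pi^*(0)$, and with the stationarity equation for the original star chain: reading off the leaf-$j$ coordinate of $\pi=\pi P$ gives $\pi(j)(1-\eta_j)=\pi(0)P(0,j)$, i.e.\ $P(0,j)=\pi(j)(1-\eta_j)/\pi(0)$ for every leaf $j$. Since $\eta_j=\gamma_i$ for $j\in m(i)$,
\[
\sum_{j\in m(i)}P(0,j)=\frac{1-\gamma_i}{\pi(0)}\sum_{j\in m(i)}\pi(j)=\frac{(1-\gamma_i)\,\pi^*(i)}{\pi(0)}=\frac{(1-\gamma_i)\,\pi^*(i)}{\pi^*(0)}=P^*(0,i),
\]
which is the assertion.

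The only genuinely substantive step is the second one: noticing that $P_0$ is diagonal, that the hitting-time tail from stationarity is therefore a sum of ``pure'' geometric terms (one per leaf), and that linear independence of geometric sequences pins down the reduced spectrum $\{\gamma_i\}$ as exactly the distinct leaf-holding eigenvalues while simultaneously identifying $\pi^*(i)=\sum_{j\in m(i)}\pi(j)$. Everything after that is bookkeeping with \refL{L:p} and the stationarity relations; one should just note in passing that the degenerate case $\gamma_i=0$ (a leaf with $P(j,0)=1$) causes no trouble, since distinct geometric sequences remain linearly independent even when one of the ratios is $0$.
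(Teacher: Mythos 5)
Your proof is correct, and it takes a genuinely different route from the paper's. The paper works algebraically with characteristic polynomials: it writes $P$ and $P^*$ in bordered form, applies the determinant formula for partitioned matrices to get $\det(tI-P) = [t - P(0,0) - x(tI-H)^{-1}y^T]\det(tI-H)$ and its $P^*$ analogue, shows $P(0,0)=P^*(0,0)$ via trace identities and Lemma~2.6 of \cite{MB}, and uses the eigenvalue-reduction procedure to obtain $\det(tI-P)/\det(tI-H)=\det(tI-P^*)/\det(tI-\Gamma)$, ending with the rational-function identity $\sum_j P(0,j)\frac{1-\eta_j}{t-\eta_j}=\sum_i P^*(0,i)\frac{1-\gamma_i}{t-\gamma_i}$ and matching poles. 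You instead work probabilistically with hitting-time tails: you observe that $P_0$ is diagonal for a star chain, write $\PP_\pi(T_0>t)=\sum_j\pi(j)\eta_j^t$ by inspection, compare with the Brown identity~\eqref{hit1}, and invoke linear independence of geometric sequences with distinct ratios (together with strict positivity of the coefficients, from ergodicity and \refL{L:p}(a)) to identify the reduced spectrum with the distinct leaf-holding probabilities and to obtain $\pi^*(i)=\sum_{j\in m(i)}\pi(j)$ and $\pi^*(0)=\pi(0)$ in the same stroke; the formula for $P^*(0,i)$ then drops out from \refL{L:p}(b) and the stationary equation $\pi(j)(1-\eta_j)=\pi(0)P(0,j)$. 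The two routes are morally dual --- your geometric-sequence identity is the ``time-domain'' version of the paper's partial-fractions identity~\eqref{eq:starex} --- but yours buys something the paper's doesn't: you get the collapse of stationary probabilities, stated after Proposition~\ref{prop:star} in the paper as a separate observation, as an intermediate byproduct rather than as a subsequent corollary, and you avoid the trace computation~\eqref{p00s} entirely. The paper's determinantal route is more self-contained at the matrix level and does not lean on the probabilistic interpretation of~\eqref{hit1}. Your remark about the degenerate ratio $\gamma_i=0$ is a nice piece of care; the argument is complete.
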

\begin{proof}
Define $H:=\text{diag}(\eta_1,\ldots,\eta_n)$ and 
\begin{align*}
x &:= (P(0,1),\ldots,P(0,n)), \\
y &:=(1-\eta_1,\ldots,1-\eta_n),
\end{align*} 
so that 
\[ 
P = \left( \begin{array}{c|c}
P(0,0) & x \\ \hline
y^T & H  \end{array} \right).
\]
By the standard formula for the determinant of a partitioned matrix (\eg,\ \cite[Section~0.8.5]{HJ}),
if~$t$ is not in the spectrum $\{\eta_1, \dots, \eta_n\}$ of~$H$ then we find
\begin{ignore}
{
A simple calculation yields
\begin{align*}
&\det(tI-P)=\det\left( \begin{array}{c|c}
t-P(0,0) & -x \\ \hline
-y^T & tI-H  \end{array} \right)\\
&=\det \left[ \left( \begin{array}{c|c}
1 & x(tI-H)^{-1} \\ \hline
0 & I  \end{array} \right)
\left( \begin{array}{c|c}
t-P(0,0) & -x \\ \hline
-y^T & tI-H  \end{array} \right)
\left( \begin{array}{c|c}
1 & 0 \\ \hline
(tI-H)^{-1}y^T & I  \end{array} \right) \right] \\
&=\det\left( \begin{array}{c|c}
t-P(0,0)-x(tI-H)^{-1}y^T & 0 \\ \hline
0 & tI-H  \end{array} \right)\\
&=(t-P(0,0)-x(tI-H)^{-1}y^T)\det(tI-H).
\end{align*}
}
\end{ignore}
\begin{equation}
\label{chP}
\det(tI-P) = [t-P(0,0)-x(tI-H)^{-1}y^T] \det(tI-H)
\end{equation}
for the characteristic polynomial of~$P$.
Analogously, define $\Gamma:=\text{diag}(\gamma_1,\ldots,\gamma_r)$ and
\begin{align*}
x^* &:= (P^*(0,1),\ldots,P^*(0,r)), \\
y^* &:= (1-\gamma_1, \ldots,1-\gamma_r);
\end{align*}
if~$t$ is not in the spectrum $\{\gamma_1, \dots, \gamma_r\}$ of~$\Gamma$, then we find
\begin{equation}
\label{chP^*}
\det(tI-P^*)=[t-P^*(0,0)-x^*(tI-\Gamma)^{-1}{y^*}^T] \det(tI-\Gamma)
\end{equation}
for the characteristic polynomial of~$P^*$.

Note that\begin{align}
\nonumber
P(0,0) &= \tr P -\tr H = \sum_{i = 0}^n \theta_i - \sum_{i = 1}^n \eta_i \\
\label{p00s}
&= \sum_{i = 0}^r \lambda_i - \sum_{i = 1}^r \gamma_i  = \tr P^*-\tr \Gamma = P^*(0,0),
\end{align}
where the third equality is a result of the eigenvalue reduction procedure discussed in \refS{S:Brown1} and the fourth equality is from Lemma~2.6 in~\cite{MB}.
Similarly, for all 
$t\notin\{\eta_1,\ldots,\eta_n\}$ we have 
\begin{equation}
\label{detratios}
\frac{\det(tI-P)}{\det(tI-H)} = \frac{\det(tI-P^*)}{\det(tI-\Gamma)}.
\end{equation}
Therefore, for all $t\notin\{\eta_1,\ldots,\eta_n\}$ we have
\begin{equation}
\label{eq:starex}
\sum_{i=1}^n P(0,i) \frac{1-\eta_i}{t-\eta_i}
= \sum_{i=1}^r P^*(0,i) \frac{1-\gamma_i}{t-\gamma_i},
\end{equation}
because using definitions of $H, x, y, \Gamma, x^*, y^*$ and equations \eqref{chP}--\eqref{detratios} we find
\begin{align*}
\sum_{i=1}^n P(0,i) &\frac{1-\eta_i}{t-\eta_i}
=x(tI-H)^{-1}y^T
= t - P(0, 0) - \frac{\det(tI-P)}{\det(tI-H)} \\
&= t - P^*(0, 0) - \frac{\det(tI-P^*)}{\det(tI-\Gamma)}
= x^*(tI-\Gamma)^{-1}{y^*}^T
= \sum_{i=1}^r P^*(0,i) \frac{1-\gamma_i}{t-\gamma_i}.
\end{align*}

Rewrite~\eqref{eq:starex} as
$$\sum_{i=1}^r P^*(0,i) \frac{1-\gamma_i}{t-\gamma_i}=\sum_{i=1}^r\left(\sum_{j\in m(i)} P(0,j)\right) \frac{1-\gamma_i}{t-\gamma_i}.$$
Since $\gamma_1, \dots, \gamma_r$ are distinct,  it follow easily that
$P^*(0,i)=\sum_{j\in m(i)} P(0,j)$ for $i = 1, \dots, r$, as desired.
\end{proof}

Let~$\pi$ be the stationary distribution for~$P$. 
Using the formula for $P^*(0, i)$ provided by Proposition~\ref{prop:star}, it is a simple matter to check that the probability mass function $\pi^*$ defined by $\pi^*(0) := \pi(0)$ and $\pi^*(i)=\sum_{j\in m(i)} \pi(j)$ for 
$i \neq 0$ satisfies the detailed balance condition and is therefore the stationary distribution for~$P^*$; indeed, using the reversibility of~$P$ with respect to~$\pi$ we have
\begin{align*}
\pi^*(0) P^*(0, i) 
&= \pi(0) \sum_{j \in m(i)} P(0, j) = \sum_{j \in m(i)} \pi(j) P(j, 0) \\ 
&= \sum_{j \in m(i)} \pi(j) (1 - \gamma_i) = \pi^*(i) P^*(i, 0).
\end{align*}

\end{document}